\newcommand{\NN}{\mathbb{N}}
\newtheorem{theorem}{Theorem}[section]
\newtheorem{lemma}[theorem]{Lemma}
\newtheorem{proposition}[theorem]{Proposition}
\newtheorem{corollary}[theorem]{Corollary}
\newtheorem{definition}[theorem]{Definition}
\newtheorem{example}[theorem]{Example}
\newtheorem{remark}[theorem]{Remark}
\newcommand{\spb}[1]{\smallskip}
\newcommand{\mpb}[1]{\medskip}
\newcommand{\bpb}[1]{\bigskip}
\renewcommand{\d}{\delta}
\begin{document}
\DeclareGraphicsExtensions{.jpg,.pdf,.mps,.png}

\title{Finite metric and $k$-metric bases on ultrametric spaces}

\author[Samuel G. Corregidor]{Samuel G. Corregidor}
\address{ Facultad CC. Matem\'aticas, Universidad Complutense de Madrid,
Plaza de Ciencias, 3. 28040 Madrid, Spain}
\email{samuguti@ucm.es}

\author[\'{A}lvaro Mart\'{\i}nez-P\'erez]{\'{A}lvaro Mart\'{\i}nez-P\'erez$^{(1)}$}
\address{ Facultad CC. Sociales, Universidad de Castilla-La Mancha,
Avda. Real F\'abrica de Seda, s/n. 45600 Talavera de la Reina, Toledo, Spain}
\email{alvaro.martinezperez@uclm.es}
\thanks{$^{(1)}$ Supported in part by a grant from
Ministerio de Ciencia, Innovaci\'on y Universidades (PGC2018-098321-B-I00), Spain.
}

\date{\today}


\begin{abstract}Given a metric space $(X,d)$, a set $S\subseteq X$ is called a $k$-\emph{metric generator} for $X$ if any pair of different points of $X$ is distinguished by at least $k$ elements of $S$.  A $k$-\emph{metric basis} is a $k$-metric generator of the minimum cardinality in $X$. We prove that ultrametric spaces do not have finite $k$-metric bases for $k>2$. We also characterize when the metric and 2-metric bases of an ultrametric space are finite and, when they are finite, we characterize them. Finally, we prove that an ultrametric space can be easily recovered knowing only the metric basis and the coordinates of the points in it. 
\end{abstract}

\maketitle{}

{\it Keywords: Metric dimension, $k$-metric dimension, ultrametric space, simplicial rooted tree.} 

{\it 2010 AMS Subject Classification numbers: 54E35,	51E99, 	51F99}

\section{Introduction}

The idea of metric dimension of a general metric space was introduced in 1953 by Blumental in \cite{B}. The intention is to have a set of fixed points so that for every point in the space, the distances to these fixed points work as a coordinate system, distinguishing every point in the space. The problem is very natural in graphs. Suppose there is a graph and the need of locating something situated in some vertex. A natural solution is to place detectors in certain vertices so that every vertex in the graph is uniquely determined by the distance to these detectors. This idea was developed independently by J. P. Slater in \cite{S1,S2}, where the sets of vertices which uniquely locate every vertex in the graph are called \emph{locating sets}, and Harary and Melter in \cite{HM}, where these sets are called \emph{resolving sets}. Harary and Melter also introduced the name of metric dimension for the cardinality of a minimum resolving set.
This concept has been extensively studied. See, for example, 
\cite{ALA,BC,BR,CH,CGH,CEJO,CSZ,FGO,T,YKR,YR}.

A natural extension of metric dimension is $k$-metric dimension which appears in \cite{ERY}. See also \cite{CM,EYR1,EYR2,EYR3,YER}. The idea now is to strengthen the security of the location asking that we have a family of points such that for every pair of points in the space there are at least $k$ points in our family which are able to distinguish them. 

Let us recall the definitions as they are established now in the literature. A pair of points, $x,y$ in a metric space $(X,d)$ are \emph{distinguished} by another point $z\in X$ if $d(x,z)\neq d(y,z)$. Given a metric space $(X,d)$, a set $S\subseteq X$ is called a $k$-\emph{metric generator} for $X$ if any pair of different points of $X$ is distinguished by at least $k$ elements of $S$.  A $k$-\emph{metric basis} is a $k$-metric generator of minimum cardinality in $X$. A metric generator (resp. metric basis) is just a 1-metric generator (resp. 1-metric basis). The $k$-\emph{metric dimension} is the cardinal of a $k$-metric basis.

A metric space $(X,d)$ is ultrametric if $d(x,y)\leq \max \{d(x,z),d(z,y)\}$
for all $x,y,z\in X$. A classical example of ultrametric space is the space of $p$-adic numbers. There is a well known relation between  trees and ultrametric spaces. One of the basic properties of these spaces is that if two balls intersect then one always contains the other. Therefore, shrinking the diameter of the balls produces a ramification process which can be interpreted as the branching of a tree. In fact, B. Hughes proved that there is a categorical equivalence between rooted, geodesically complete $\mathbb{R}$-trees and bounded, complete ultrametric spaces, \cite{H}, where ultrametric spaces arise as the boundary at infinity of the trees. See also \cite{MM}. This approach is very useful for the representation of ultrametric spaces.

Ultrametric spaces have applications in several branches of science, for example, in physics \cite{AS,MPV}, taxonomy \cite{JS,SS}, psychology \cite{K1,K2,L,Ma}, computer science \cite{M} and phylogenesis \cite{GD}. In fact, any hierarchical system can be modeled by an ultrametric space. Thus, for example, the problem of obtaining a phylogenetic tree or the problem of obtaining a hierarchical clustering for a data set is equivalent to giving an appropriate ultrametric to a set.

In this paper we study the existence of finite $k$-metric bases in ultrametric spaces. We prove that ultrametric spaces have no finite $k$-metric basis for $k>2$. See Theorem \ref{t:kmetric3}. We find that in order to have a finite $k$-metric basis the ultrametric space must satisfy certain condition which we name being pseudo-complete. See Proposition \ref{p:pseudo_infinite2}. In certain natural cases (for example, if the ultrametric space arises as the end space of a geodesically complete simplicial tree), being complete implies being pseudo-complete. However, being pseudo-complete is a necessary but not sufficient condition. As we show in Theorem \ref{t:unpartnered_metricbasis} an ultrametric space has a finite metric basis if and only if it is pseudo-complete and two families of points, the partnered and the unpartnered, are finite. Moreover, there is a finite 2-metric basis if and only if the space is pseudo-complete, the set of partnered points is finite and the set of unpartnered points is empty. Finally,  we characterize the finite metric and 2-metric bases of an ultrametric space in the cases where such a basis exists. See theorems \ref{t:no_unpartnered_2metricbasis2}, \ref{t:no_unpartnered_metricbasis} and \ref{t:unpartnered_metricbasis}.

We also include lower and upper bounds for the $k$-metric dimension for finite ultrametric spaces and show, with examples, that these bounds are sharp.

With a more applicable approach, we obtain that given a pseudo-complete ultrametric space with no unpartnered points and with metric basis $S$, there is a unique minimal subset having $S$ as metric basis and characterize it. See Theorem \ref{t:minimalspace}. Also, we prove that ultrametric spaces can be easily recovered through the metric coordinates, i.e., if we have a metric basis $S$ of an unknown ultrametric space $(X,d)$ and we know the metric coordinates of every point, this is, the distances from every point in $X$ to every point in $S$, then $(X,d)$ is unique and easily rebuilt. See Theorem \ref{t:finite_ultrametricbasis}.

\section{$k$-metric dimension in ultrametric spaces}


Let us denote $dim_k(X)$ the cardinality of any $k$-metric basis. A metric space $(X,d)$ is said to be \emph{$k$-metric dimensional} if $k$ is the largest integer such that there exists a $k$-metric basis for $X$. We denote this as $Dim(X) = k$.

Given an ultrametric space $(X,d)$, let us establish some notations. For every point $x \in X$ and $r>0$ we denote the balls 
$$B_r(x) = \{ p\in X \, | \, d(x,p) < r \} \mbox{ and }  \overline{B}_r(x) = \{ p\in X \, | \, d(x,p) \leqslant r \} .$$

Let us recall some basic properties of ultrametric spaces. See, for example, \cite{H}. 

\begin{proposition}\label{p:elementals}Given an ultrametric space $(X,d)$, then the following properties hold.
	\begin{enumerate}
		\item Given two balls $B_1, B_2 \subseteq X$ such that $B_1 \cap B_2 \neq\emptyset$, then $B_1 \subseteq B_2$ or $B_2 \subseteq B_1$. 
		\item Given a ball $B$, then every point $x\in B$ is the center of $B$.
		\item Every triangle in $X$ is isosceles with a short base.
	\end{enumerate}
\end{proposition}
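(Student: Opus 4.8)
The plan is to derive all three items directly from the ultrametric inequality, establishing (3) and (2) independently and then obtaining (1) as an immediate consequence of (2).

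For item (3), I would take three points $x,y,z\in X$ and relabel them so that $d(x,y)=\max\{d(x,y),d(x,z),d(y,z)\}$. The ultrametric inequality gives $d(x,y)\leqslant\max\{d(x,z),d(z,y)\}$, while by the choice of labelling both $d(x,z)$ and $d(z,y)$ are at most $d(x,y)$; hence $\max\{d(x,z),d(z,y)\}=d(x,y)$, so at least one of $d(x,z)$, $d(z,y)$ equals $d(x,y)$. Thus the two largest sides of the triangle coincide and the third side is no longer than them, which is exactly the statement that every triangle is isosceles with a short base (the equilateral case being included).

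For item (2), fix a ball and a point $y$ lying in it, and show that the ball of the same radius centered at $y$ coincides with the original ball. If $B_r(x)$ is an open ball and $y\in B_r(x)$, so $d(x,y)<r$, then for any $p\in B_r(x)$ we have $d(y,p)\leqslant\max\{d(y,x),d(x,p)\}<r$, hence $p\in B_r(y)$; by symmetry $B_r(y)=B_r(x)$, so $y$ is also a center. Replacing $<$ by $\leqslant$ throughout gives the same conclusion for the closed ball $\overline{B}_r(x)$.

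Finally, for item (1), let $B_1$ and $B_2$ be balls with $B_1\cap B_2\neq\emptyset$ and pick $p\in B_1\cap B_2$. By item (2), $p$ is a center of $B_1$ and of $B_2$, so each $B_i$ can be written as an open or closed ball of some radius $r_i$ centered at $p$. Comparing the radii $r_1$ and $r_2$ then immediately yields $B_1\subseteq B_2$ or $B_2\subseteq B_1$; the only case needing a moment's care is $r_1=r_2$ with one ball open and the other closed, where the open ball is contained in the closed one. This bookkeeping between open and closed balls is the only mildly delicate point; every other step is a one-line application of the ultrametric inequality.
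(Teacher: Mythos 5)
Your proof is correct and complete: the relabelling argument for (3), the two-sided inclusion for (2), and the reduction of (1) to a comparison of radii about a common center (with the careful open/closed bookkeeping) are all sound. The paper itself offers no proof of this proposition --- it states these as standard facts and cites Hughes --- and your argument is exactly the standard one that reference supplies, so there is nothing to compare beyond noting that you have filled in the omitted details correctly.
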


\begin{lemma}\label{l:distinguished}Given an ultrametric space $(X,d)$, if $p\in X$ and $R>0$, then there is no pair of points in $B_R (p)$ distinguished by points of $X \smallsetminus B_R (p)$.
\end{lemma}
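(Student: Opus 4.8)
The plan is to prove the contrapositive-free direct statement: every point $z \in X \smallsetminus B_R(p)$ is equidistant from any two points $x,y \in B_R(p)$, so that no such $z$ can distinguish such a pair. Fix $x,y \in B_R(p)$ and $z \in X \smallsetminus B_R(p)$; the goal is to show $d(x,z)=d(y,z)$.

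First I would collect two elementary distance estimates. Since $d(x,p)<R$ and $d(y,p)<R$, the ultrametric inequality gives $d(x,y)\leq\max\{d(x,p),d(p,y)\}<R$. For the other side, note that $d(x,p)<R$ implies $B_R(x)=B_R(p)$ by Proposition \ref{p:elementals}(2) (every point of a ball is its center), so $z\notin B_R(p)=B_R(x)$ means $d(x,z)\geq R$; symmetrically $d(y,z)\geq R$. (Alternatively one gets $d(x,z)\geq R$ directly from $d(p,z)\leq\max\{d(p,x),d(x,z)\}$ together with $d(p,x)<R\leq d(p,z)$.)

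Now apply Proposition \ref{p:elementals}(3) to the triangle with vertices $x,y,z$: it is isosceles with short base. Since $d(x,y)<R\leq d(x,z)$ and $d(x,y)<R\leq d(y,z)$, the side $xy$ is the strictly shortest, so it must be the base and the two equal sides are $xz$ and $yz$; that is, $d(x,z)=d(y,z)$. Hence $z$ does not distinguish $x$ from $y$, and since $x,y\in B_R(p)$ and $z\in X\smallsetminus B_R(p)$ were arbitrary, the claim follows. There is no real obstacle in this argument; the only step needing a moment's attention is the lower bound $d(x,z)\geq R$, which is immediate once one observes that $x$ is also a center of the ball $B_R(p)$.
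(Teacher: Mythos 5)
Your proof is correct and follows essentially the same route as the paper's: both establish $d(x,z),d(y,z)\geq R> d(x,y)$ and then invoke the ultrametric (isosceles-with-short-base) property to conclude $d(x,z)=d(y,z)$. You simply spell out the two inequalities that the paper's one-line proof leaves implicit.
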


\begin{proof}Given two points $x,y \in B_R (p)$, then for all $z\in X \smallsetminus B_R(p) ,\, d(x,z) , d(y,z) \geqslant R > d(x,y) $ and since $d$ is ultrametric $ d(x,z) = d(y,z)$.
\end{proof}

\begin{definition}\label{d:partner}Given an ultrametric space $(X,d)$, we say that two different points $x,y\in X$ are partners if $d(x,z) \geqslant d(x,y)$ and $d(y,z) \geqslant d(x,y)$ for all $z\in X\smallsetminus\{x,y\}$. A point $x\in X$ is said to be partnered if there exists some other point $y\in X$ such that $x,y$ are partners. A point $x\in X$ is said to be pseudopartnered if it is not partnered but there exists some other point $y\in X$ such that $d(x,z) \geqslant d(x,y)$ for all $z\in X\smallsetminus\{x,y\}$. A point $x\in X$ is unpartnered if $x$ is not partnered nor pseudopartnered. 
\end{definition}

\begin{remark} If $x$ is an adherent point of $X$, then $x$ is unpartnered. However, there exist unpartnered points which are not adherent points as it is shown in Example \ref{ex:ad} below. In a finite ultrametric space there are not unpartnered points.
\end{remark}

\begin{example}\label{ex:ad} Let $X=\NN\cup \{x\}$ with an ultrametric $d$ such that $d(n,m)=1+\frac{1}{\min\{n,m\}}$ for every $n,m\in \NN$ and $d(n,x)=1+\frac{1}{n}$. It is immediate to check that $(X,d)$ is ultrametric. Then, $x$ is an unpartnered point but it is not in the adherence of $X$, which is the empty set.
\end{example}

From now on, we will suppose that every ultrametric space has at least two points. The following propositions show that the existence of a partnered or an unpartnered point has immediate consequences on the existence of finite $k$-metric basis.

\begin{proposition}\label{p:unpartnered_metricbasis}Let $(X,d)$ be an ultrametric space and $x\in X$ be an unpartnered point. If $S$ is a metric basis, then $x\in S$ or $S$ is infinite.
\end{proposition}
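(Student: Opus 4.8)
The plan is to argue by contradiction: suppose $S$ is a metric basis with $x \notin S$ and $S$ finite. Since $x$ is unpartnered, it is in particular not partnered and not pseudopartnered. I want to exhibit a point $y \in X$ that is not distinguished from $x$ by any element of $S$, contradicting the fact that $S$ is a metric generator.

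The key step is to extract, from the negation of ``pseudopartnered'', a suitable witness. For each $y \in X \smallsetminus \{x\}$, since $x$ is not pseudopartnered (and not partnered), there exists some $z = z(y) \in X \smallsetminus \{x, y\}$ with $d(x, z) < d(x, y)$. Equivalently, for every $r > 0$, the punctured ball $\overline{B}_r(x) \smallsetminus \{x\}$ is infinite, or at least the infimum $\inf\{ d(x,y) : y \neq x\}$ is either not attained or attained by points that themselves can be pushed closer — I will need to phrase this carefully. Concretely: let $r = \min\{ d(x,s) : s \in S\} > 0$ (finite since $S$ is finite and $x \notin S$). I claim $B_r(x) \smallsetminus \{x\} \neq \emptyset$; otherwise every $y \neq x$ has $d(x,y) \geq r$, and for any such $y$ with $d(x,y) = r$ minimal we would need $z$ with $d(x,z) < r$, impossible, so the minimum would be attained by a unique value and one checks $x$ would be partnered or pseudopartnered — contradiction. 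Having found $y \in B_r(x)$ with $y \neq x$, note $S \subseteq X \smallsetminus B_r(x) \subseteq X \smallsetminus B_{d(x,y)}(x)$ wait — more simply, by Lemma \ref{l:distinguished} applied with $p = x$ and radius $r$, no pair of points in $B_r(x)$ is distinguished by points of $X \smallsetminus B_r(x)$; since $x, y \in B_r(x)$ and $S \subseteq X \smallsetminus B_r(x)$, no element of $S$ distinguishes $x$ from $y$, so $S$ is not a metric generator.

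The main obstacle will be the first claim, namely ruling out the case where $\inf\{d(x,y) : y \neq x\}$ is a positive minimum attained in a way that is compatible with $x$ being unpartnered. I expect this to unwind as follows: if $r_0 := \inf\{d(x,y): y\neq x\} > 0$ and there is $y$ with $d(x,y) = r_0$, then for all $z \neq x,y$ we have $d(x,z) \geq r_0 = d(x,y)$, so $x$ is at least pseudopartnered (and partnered if some such $y$ also satisfies $d(y,z) \geq r_0$ for all $z$) — contradicting that $x$ is unpartnered. Hence $r_0$ is either $0$ or not attained, and in either case $B_r(x) \smallsetminus \{x\} \neq \emptyset$ for the specific $r = \min\{d(x,s) : s \in S\}$, since if it were empty then $r_0 \geq r > 0$ and $r_0$ would be attained (by density of the value set near the infimum — actually, if no point lies in $B_r(x)$ then $r_0 \geq r$, and taking any $y$ we get $d(x,y) \geq r_0$; picking $y$ close to realizing $r_0$ still may not attain it, so I instead argue directly that emptiness of $B_r(x)\smallsetminus\{x\}$ forces every $z \neq x$ to have $d(x,z) \geq r$, making any single witness $y$ with $d(x,y) = r_0 \geq r$ a pseudopartner). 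I will present the clean version: unpartnered implies that for every $r > 0$ there is $y \neq x$ with $d(x,y) < r$, which is exactly what is needed, and this equivalence I will state and prove as the crux of the argument.
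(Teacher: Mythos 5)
Your main argument is correct and is essentially the paper's own proof: take $s\in S$ realizing $r=\min_{s'\in S}d(x,s')>0$, use the fact that $x$ is neither partnered nor pseudopartnered to produce $z$ with $0<d(x,z)<r$, and conclude via Lemma \ref{l:distinguished} that $S\subseteq X\smallsetminus B_{r}(x)$ cannot distinguish $x$ from $z$. However, the ``clean version'' you announce at the end --- that unpartnered implies for every $r>0$ there is $y\neq x$ with $d(x,y)<r$ --- is false, and you should not make it the stated crux of the write-up. Example \ref{ex:ad} of the paper exhibits an unpartnered point $x$ with $d(x,y)>1$ for every $y\neq x$: unpartneredness only says that no $y\neq x$ realizes the infimum of $\{d(x,z):z\neq x\}$, i.e.\ that infimum is never attained; it does not force the infimum to be $0$, nor does it make punctured balls around $x$ infinite or even nonempty for small radii. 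What saves your argument (and is exactly what the paper uses) is that the radius you work with is itself an attained distance, $r=d(x,s)$ with $s\in X\smallsetminus\{x\}$, so applying the negation of ``partnered or pseudopartnered'' to this particular witness $s$ yields the required $z$ with $0<d(x,z)<d(x,s)$. Keep the ``concretely'' paragraph (which already contains this correct step) and drop the final reformulation.
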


\begin{proof}Let $x\in X$ be an unpartnered point. Suppose that there exists $S$ a finite metric basis of $(X,d)$ and $x\notin S$. Since $S$ is finite, then there exists $s \in S$ such that $ 0 < d(x,s) = \min\limits _{z\in S} d(x,z) $. Since $ x $ is not partnered nor pseudopartnered, then there exists $z\in X$ such that $ 0 < d(x,z) < d(x,s)$. Then, by Lemma \ref{l:distinguished}, the points $z$ and $x$ are not distinguished by $S\subseteq X \smallsetminus B_{d(x,s)} (x) $, leading to a contradiction.
\end{proof}

\begin{proposition}\label{p:unpartnered_kmetricbasis}Let $(X,d)$ be an ultrametric space and $k>1$. If there is an unpartnered point $x\in X$, then every $k$-metric basis is infinite.
\end{proposition}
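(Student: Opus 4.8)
The plan is to adapt the argument from Proposition~\ref{p:unpartnered_metricbasis} to the $k$-metric setting, exploiting the fact that an unpartnered point is, in particular, not pseudopartnered. Suppose for contradiction that $x\in X$ is unpartnered and that $S$ is a finite $k$-metric basis with $k>1$. Unlike the $1$-metric case, we cannot simply assume $x\notin S$; instead, consider $S'=S\smallsetminus\{x\}$, which is still finite and nonempty (if $S=\{x\}$ then $S$ fails to distinguish any pair, since $x$ distinguishes a pair $y,y'$ only when neither equals $x$... actually we should just note $|S|\geq k\geq 2$ so $S'\neq\emptyset$). Since $S'$ is finite and $x\notin S'$, there is $s\in S'$ with $0<d(x,s)=\min_{z\in S'}d(x,z)$.

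Next I would invoke the hypothesis that $x$ is not pseudopartnered: for every $y\in X\smallsetminus\{x\}$ there exists $z\in X\smallsetminus\{x,y\}$ with $d(x,z)<d(x,y)$. Apply this with $y=s$ to get a point $z_1$ with $0<d(x,z_1)<d(x,s)$, hence $z_1\in B_{d(x,s)}(x)$ and $z_1\neq x$. Then apply it again with $y=z_1$ to get $z_2\neq x,z_1$ with $0<d(x,z_2)<d(x,z_1)$, so $z_1,z_2\in B_{d(x,s)}(x)$ are two distinct points, both different from $x$, lying in the ball $B_{d(x,s)}(x)$. By the choice of $s$ as the nearest point of $S'$ to $x$, we have $S'\subseteq X\smallsetminus B_{d(x,s)}(x)$, so by Lemma~\ref{l:distinguished} no point of $S'$ distinguishes $z_1$ from $z_2$. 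Therefore the only element of $S$ that could possibly distinguish the pair $\{z_1,z_2\}$ is $x$ itself. Hence $\{z_1,z_2\}$ is distinguished by at most one element of $S$, contradicting the assumption that $S$ is a $k$-metric generator with $k>1$.

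The one technical point to handle carefully is ensuring that after removing $x$ the set $S'$ is still nonempty and that the nearest-point $s$ exists with $d(x,s)>0$: both follow from $|S|\geq k\geq 2$ together with the fact that distinct points of a metric space have positive distance. A second small obstacle is making sure we can iterate the non-pseudopartnered condition twice to produce two distinct witnesses $z_1\neq z_2$ strictly inside the ball; this is where using the strict inequality $d(x,z_{i+1})<d(x,z_i)$ (rather than merely $\leq$) is essential, and it is exactly the failure of the pseudopartner inequality that grants it. Once these points are in hand, the contradiction is immediate from Lemma~\ref{l:distinguished}. I do not expect any genuinely hard step; the proof is essentially a strengthening of Proposition~\ref{p:unpartnered_metricbasis} obtained by producing two bad points in the small ball instead of one, so that even keeping $x$ in the generator leaves the pair under-distinguished.
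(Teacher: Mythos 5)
Your proof is correct and follows essentially the same route as the paper: both locate the nearest point $s$ of $S\smallsetminus\{x\}$ to $x$, use the failure of the pseudopartner condition to drop inside the ball $B_{d(x,s)}(x)$, and then apply Lemma~\ref{l:distinguished} to conclude that only $x$ itself could distinguish the resulting pair, contradicting $k\geq 2$. The only (immaterial) difference is that the paper uses the pair $\{x,z\}$ with a single witness $z$ inside the ball, whereas you iterate the condition once more to get a pair $\{z_1,z_2\}$ avoiding $x$; both choices yield the same contradiction.
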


\begin{proof}Let $x\in X$ be an unpartnered point. Suppose that there exists $S$ a finite $k$-metric basis of $(X,d)$. Since $S$ is finite, then there exists $s \in S$ such that $ 0 < d(x,s) = \min\limits _{z\in S\smallsetminus\{x\}} d(x,z) $. Since $ x $ is not partnered nor pseudopartnered, then there exists $z\in X$ such that $ 0 < d(x,z) < d(x,s)$. Then, by lemma \ref{l:distinguished}, the points $z$ and $x$ are not distinguished by $k$ points of $ S $, leading to a contradiction.
\end{proof}

\begin{remark}\label{r:partner_12metricbasis}Consider $x,y$ two points that are partners in an ultrametric space $(X,d)$. Then $x,y$ are in every 2-metric basis of $(X,d)$ and either $x$ or $y$ is in every metric basis of $(X,d)$. 
\end{remark}

\begin{proposition}\label{p:partnered_kmetricbasis}Let $(X,d)$ be an ultrametric space and $k>2$. If there exists a partnered point in $X$, then there is no $k$-metric basis in $(X,d)$.
\end{proposition}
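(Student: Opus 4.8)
The plan is to show that the pair consisting of the two partners can itself be distinguished by at most two points of $X$, which immediately rules out any $k$-metric generator for $k\geq 3$ and hence any $k$-metric basis for $k>2$.

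First I would fix partners $x,y\in X$, so that $d(x,z)\geq d(x,y)$ and $d(y,z)\geq d(x,y)$ for every $z\in X\smallsetminus\{x,y\}$. For such a $z$, applying the ultrametric inequality to the triangle $x,y,z$ together with $d(x,y)\leq d(y,z)$ gives $d(x,z)\leq\max\{d(x,y),d(y,z)\}=d(y,z)$, and symmetrically $d(y,z)\leq\max\{d(x,y),d(x,z)\}=d(x,z)$; hence $d(x,z)=d(y,z)$. (Equivalently, this is Proposition \ref{p:elementals}(3): the triangle $x,y,z$ is isosceles with short base $d(x,y)$.) Therefore no point of $X\smallsetminus\{x,y\}$ distinguishes $x$ from $y$.

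On the other hand, $x$ distinguishes the pair $\{x,y\}$, since $d(x,x)=0<d(x,y)=d(y,x)$ because $x\neq y$, and likewise $y$ distinguishes it. So the set of points of $X$ distinguishing $x$ from $y$ is exactly $\{x,y\}$, of cardinality two. Consequently, for any $S\subseteq X$ the pair $\{x,y\}$ is distinguished by at most two elements of $S$, so $S$ is not a $3$-metric generator, and a fortiori not a $k$-metric generator for any $k>2$. Since a $k$-metric basis is in particular a $k$-metric generator, there is no $k$-metric basis in $(X,d)$ for $k>2$.

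I do not expect a genuine obstacle here; the only step requiring a little care is verifying that $x$ and $y$ really do distinguish each other, which relies on the partners being distinct points, as built into Definition \ref{d:partner}.
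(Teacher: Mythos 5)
Your proposal is correct and follows the paper's own argument exactly: a pair of partners is distinguished only by the two points themselves, since every other point $z$ satisfies $d(x,z)=d(y,z)$ by the isosceles property, so no $3$-metric generator can exist. You merely spell out the ultrametric computation and the trivial fact that $x$ and $y$ distinguish themselves in slightly more detail than the paper does.
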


\begin{proof}Suppose that there exists a partnered point $x\in X$. Then, there exists $y\in X$ such that $x,y$ are partners. If $z\in X\smallsetminus\{x,y\}$, then $d(x,z) \geqslant d(x,y)$ and $d(y,z) \geqslant d(x,y)$. By definition of ultrametric, $ d(x,z) = d(y,z) $. Therefore, $x,y$ are distinguished only by themselves.
\end{proof}


The effect in the existence of finite $k$-metric bases of  how are pseudopartnered points organized in the ultrametric space is a more delicate problem to study. To deal with this question we define the following sequences.

\begin{definition}\label{d:distinguished_sequence}Given a pseudopartnered point $x$ in an ultrametric space $(X,d)$, let us define the \emph{$x$-pseudopartnering sequence} $(x_n)_{n\in\mathbb{N}} $ and the associated distances $(d_n)_{n\in\mathbb{N}} $ by induction. First, let

$$ \left\lbrace \begin{array}{c}
x_1 = x \\ 
d_1 = \infty
\end{array}  \right.  $$

Now suppose that $x_n$ and $d_n$ are defined. Then we define $x_{n+1}$ and $d_{n+1}$ as follows: 

\begin{itemize}
\item If $ |B_{d_n} (x_n)| = 1 $ or $d_n = 0$,
$$ \left\lbrace \begin{array}{l}
x_{n+1} = x_n \\ 
d_{n+1} = 0
\end{array}  \right. $$

\item If $|B_{d_n} (x_n)| > 1$, let $ I_n = \{ d(x_n , z) \, |\, z\in B_{d_n} (x_n) \smallsetminus \{x_n\} \} $. 

\begin{itemize}
\item If $I_n$ has no minimum, then for all $y\in X$ there exists $z \in X$ such that $0<d(x_n , z) < d(x_n , y)$. Therefore, $x_n$ is unpartnered and we define:
$$ \left\lbrace \begin{array}{l}
x_{n+1} = x_n \\ 
d_{n+1} = 0
\end{array}  \right. $$

\item If $I_n$ has minimum, then choose any point minimizing the distance:
$$ \left\lbrace \begin{array}{l}
x_{n+1} \in  \{ z\in X \, | \, d(x_n , z) = \min I_n \}  \\ 
d_{n+1} = d(x_n , x_{n+1})
\end{array}  \right. $$
\end{itemize}
\end{itemize}
\end{definition}

\begin{remark} Notice that given a pseudopartnered point $x$ in an ultrametric space $X$, the $x$-pseudopartnering sequence $(x_n)_{n\in \NN}$ is not necessarily unique. Also, $(x_n)_{n\in \NN}$ satisfies that $x_{n+1}\neq x_n$ for every $n$ if and only if $x_n$ is pseudopartnered for every $n$. In this case, we say that the pseudopartnering sequence is \emph{infinite}\footnote{Notice that in order to have an infinite pseudopartnering sequence we have to accept the axiom of countable choice.}. If there exists some point $z\in X$ such that $lim_{n\to \infty}x_n=z$ we say that the pseudopartnering sequence is \emph{convergent}. (Finite pseudopartnering sequences are obviously convergent.) The following examples show that an infinite pseudopartnering sequence may be convergent or not. Moreover, the sequence of distances $(d_n)$ in the infinite pseudopartnering sequence may be convergent to 0 or to some positive number.
\end{remark}

\begin{example}\label{e:natural} Let us define an ultrametric in $\mathbb{N}$. $$ \begin{array}{llll}
	d: & \mathbb{N}\times\mathbb{N} & \longrightarrow & \mathbb{R} \\ 
	& (n,m) & \mapsto & d(n,m) = \left\lbrace \begin{array}{cc}
	\dfrac{1}{\min \{ n,m \}} & \text{if } n\neq m \\ 
	0 & \text{if } n = m
	\end{array}  \right. 
	\end{array}  $$ It is easy to check that every point in $(\mathbb{N}, d)$ is pseudopartnered and every pseudopartnering sequence is infinite and non-convergent with $lim_{n\to \infty}d_n=0$.
	
	If we add an extra poing $\infty$ such that $d(n,\infty)=\frac1n$, then every pseudopartnering sequence converges to $\infty$.
\end{example}

\begin{example} Let $(X,d)$ be the ultrametric space defined in Example \ref{ex:ad}. Then, for every $n\in \NN$, $n$ is pseudopartnered and $x$ is unpartnered. The sequence $(n)_{n\in \NN}$ is a 1-pseudopartnering sequence with $d_{n+1}=1+\frac1n$. Thus, $(n)_{n\in \NN}$ is not convergent. In fact, $lim_{n\to \infty}d_n=1$.
\end{example}

\begin{definition} Given a pseudopartnering sequence $(x_n)_{n\in \NN}$, $z$ is a \emph{pseudolimit} of $(x_n)_{n\in \NN}$ if $d(z,x_n)=d(x_n,x_{n+1})$ for every $n\in \NN$. If a  pseudopartnering sequence has a pseudolimit we say that the pseudopartnering sequence is \emph{pseudoconvergent}.
\end{definition}

\begin{remark}\label{r:plimit} Given a pseudopartnering sequence $(x_n)_{n\in \NN}$, by the properties of the ultrametric, the definition of pseudolimit is equivalent to the following: $z$ is a pseudolimit of $(x_n)_{n\in \NN}$ if $z\in \overline{B}_{d_n+1}(x_{n+1})$ for every $n\in \NN$. 
\end{remark}

Let us denote as $pl(x_n)$ the set of pseudolimits of the pseudopartnering sequence $(x_n)$. Then, by Remark \ref{r:plimit}, the folowing proposition is immediate.

\begin{proposition}\label{pl_ball} Given an infinite pseudopartnering sequence $(x_n)_{n\in \NN}$, then $pl(x_n)=\cap_{n\in \NN}\overline{B}_{d_n}(x_n)$.
\end{proposition}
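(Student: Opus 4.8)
The plan is to read the statement straight off Remark \ref{r:plimit}, so almost all of the work has already been done. That remark identifies a point $z\in X$ as a pseudolimit of $(x_n)_{n\in\NN}$ exactly when $z\in\overline{B}_{d_{n+1}}(x_{n+1})$ for every $n\in\NN$; in other words, $pl(x_n)=\bigcap_{n\in\NN}\overline{B}_{d_{n+1}}(x_{n+1})$. My first step is therefore purely formal: reindex this intersection by setting $m=n+1$, so that, as $n$ runs over $\NN=\{1,2,3,\dots\}$, $m$ runs over $\{2,3,4,\dots\}$, giving $pl(x_n)=\bigcap_{m\geqslant 2}\overline{B}_{d_m}(x_m)$.

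The second and last step is to check that adjoining the missing term $m=1$ does not change this intersection, so that $pl(x_n)=\bigcap_{n\in\NN}\overline{B}_{d_n}(x_n)$. This is immediate from the definition of the pseudopartnering sequence: there $d_1=\infty$, hence $\overline{B}_{d_1}(x_1)=X$, and intersecting with the whole space has no effect. Combining the two steps yields the claimed equality.

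I do not expect any genuine obstacle here — as the text says, the proposition is immediate given Remark \ref{r:plimit}, and its mathematical content sits entirely in that remark, namely the equivalence between the distance description of a pseudolimit ($d(z,x_n)=d(x_n,x_{n+1})=d_{n+1}$ for all $n$) and the nested-ball description. The one place where a little care would be needed, if one wanted a self-contained argument, is reproving that equivalence: the forward direction is the strong triangle inequality, $d(z,x_{n+1})\leqslant\max\{d(z,x_n),d(x_n,x_{n+1})\}=d_{n+1}$, while for the converse one uses that $x_n$ and $x_{n+1}$ are both centers of $\overline{B}_{d_{n+1}}(x_{n+1})$ (Proposition \ref{p:elementals}) together with $d_{n+1}=\min I_n$ to push the distance $d(z,x_n)$ back up to exactly $d_{n+1}$. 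For the present statement it is enough to cite Remark \ref{r:plimit} and perform the trivial reindexing.
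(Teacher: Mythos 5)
Your proposal is correct and matches the paper exactly: the paper gives no proof at all, stating only that the proposition is immediate from Remark \ref{r:plimit}, and your reindexing plus the observation that $d_1=\infty$ makes $\overline{B}_{d_1}(x_1)=X$ is precisely the trivial bookkeeping being left implicit. Your aside on how one would reprove the remark itself (strong triangle inequality one way, minimality of $d_{n+1}=\min I_n$ the other) correctly locates where the real content lies.
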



\begin{remark}\label{r:pl_ball} Given an infinite pseudoconvergent pseudopartnering sequence $(x_n)_{n\in \NN}$, if $\lim_{n\to \infty} d_n=0$, then $pl(x_n)=lim_{n\to \infty}x_n$ (which may be the empty set). Moreover, if $(x_n)_{n\in \NN}$ is a convergent pseudopartnering sequence, its limit is the unique pseudolimit. If $\lim_{n\to \infty} d_n=d>0$, then $pl(x_n)=\overline{B}_d(z)$ for any $z\in pl(x_n)$. 
\end{remark}

\begin{definition}\label{d:complete}An ultrametric space $(X,d)$ is \emph{pseudo-complete} if for every pseudopartnered point $x\in X$ two conditions are satisfied:
\begin{itemize}
	\item[i)]  every $x$-pseudopartnering sequence has a pseudolimit,
	\item[ii)] at least one $x$-pseudopartnering sequence has a finite number of pseudolimits.
\end{itemize}
\end{definition}

\begin{remark}\label{r:finite_complete}If an ultrametric space has no pseudopartnered point, then it is pseudo-complete. Also, every finite ultrametric space is pseudo-complete.
\end{remark}

Notice that being complete does not imply being pseudo-complete. 

\begin{example} Let us define an ultrametric in $\mathbb{N}$. $$ \begin{array}{llll}
	d: & \mathbb{N}\times\mathbb{N} & \longrightarrow & \mathbb{R} \\ 
	& (n,m) & \mapsto & d(n,m) = \left\lbrace \begin{array}{cc}
	1+\dfrac{1}{\min \{ n,m \}} & \text{if } n\neq m \\ 
	0 & \text{if } n = m
	\end{array}  \right. 
	\end{array}  $$ Notice that for every $n\in \mathbb{N}$, $B_1(n)=\{n\}$ and the space is complete. However, every pseudopartnering sequence is infinite and has no pseudolimit.
\end{example}

Also, being pseudo-complete does not imply being complete.

\begin{example} Consider the union of two infinite sequences of points, $X=(x_n)_{n\in \NN}\cup (p_n)_{n\in \NN}$. Let $d(x_i,x_j)=d(p_i,p_j)=d(x_i,p_j)=\frac{1}{\min\{i,j\}}$ and $d(x_i,p_i)=\frac{1}{i+1}$. Thus, it is readily seen that $(X,d)$ is an ultrametric space where $x_i,d_i$ are partners for every $i$. Since there are not pseudopartnered points, the space is pseudo-complete. However, the sequence $(x_n)_{n\in \NN}$ is Cauchy but has no limit. Thus, $X$ is not complete. 
\end{example}

\smallskip

However, under certain (natural) conditions being complete implies being pseudo-complete.
By Remark \ref{r:pl_ball} 
if $(X,d)$ is complete, every pseudopartnering sequence $(x_n)$ such that $lim_{n\to \infty}d_n=0$ has a unique pseudolimit. Then, the following proposition is immediate.

\begin{proposition} Given a complete ultrametric space $(X,d)$, if for every pseudopartnering sequence $(x_n)$, $lim_{n\to \infty}d_n=0$, then $(X,d)$ is pseudo-complete.
\end{proposition}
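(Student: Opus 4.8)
The plan is to verify the two conditions of Definition~\ref{d:complete} directly. Since the statement is vacuous at points that are not pseudopartnered, fix a pseudopartnered point $x\in X$ and an arbitrary $x$-pseudopartnering sequence $(x_n)_{n\in\NN}$ with associated distances $(d_n)_{n\in\NN}$; by hypothesis $\lim_{n\to\infty}d_n=0$. It will be enough to show that $(x_n)$ has exactly one pseudolimit, since then condition (i) holds because $pl(x_n)\neq\emptyset$ and condition (ii) holds because $pl(x_n)$ is a single point.

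First I would record that, by Definition~\ref{d:distinguished_sequence}, the sequence $(d_n)$ is strictly decreasing as long as it is positive and is $0$ from then on (at each step either $d_{n+1}=0$, or $d_{n+1}=\min I_n<d_n$). An easy induction using the ultrametric inequality then gives $d(x_n,x_m)=d_{n+1}$ for all $m>n$: the base case is $d(x_n,x_{n+1})=d_{n+1}$, and if $d(x_n,x_{m-1})=d_{n+1}$ then $d(x_{m-1},x_m)=d_m\le d_{n+2}<d_{n+1}$, whence $d(x_n,x_m)=d_{n+1}$ (the case $d_{n+1}=0$ being trivial, as the sequence is then eventually constant). Since $d_{n+1}\to 0$, the sequence $(x_n)$ is Cauchy, so by completeness of $(X,d)$ it converges to some $z\in X$. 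Letting $m\to\infty$ in $d(x_n,x_m)=d_{n+1}$ and using continuity of $d$ yields $d(z,x_n)=d_{n+1}=d(x_n,x_{n+1})$ for every $n$, i.e. $z$ is a pseudolimit of $(x_n)$. (This is precisely the content of Remark~\ref{r:pl_ball} in the case $\lim d_n=0$, namely $pl(x_n)=\lim_n x_n$, the limit now being available thanks to completeness.) For uniqueness, Proposition~\ref{pl_ball} shows that any pseudolimit $z'$ lies in $\overline{B}_{d_n}(x_n)$ for every $n$, so $d(z',x_n)\le d_n\to 0$ and hence $z'=z$.

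Thus $pl(x_n)=\{z\}$ for every $x$-pseudopartnering sequence, which gives both (i) and (ii). As $x$ was an arbitrary pseudopartnered point, $(X,d)$ is pseudo-complete. There is no genuine obstacle in this argument; the only step that needs a moment's care is checking that $(x_n)$ is Cauchy, which rests entirely on the monotonicity of $(d_n)$ together with the hypothesis $\lim_n d_n=0$.
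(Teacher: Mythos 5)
Your proof is correct and follows essentially the same route as the paper, which simply invokes Remark \ref{r:pl_ball}: completeness plus $\lim_{n\to\infty}d_n=0$ makes each pseudopartnering sequence Cauchy, hence convergent, and its limit is the unique pseudolimit, giving both conditions of Definition \ref{d:complete}. You have merely filled in the details (monotonicity of $(d_n)$, the identity $d(x_n,x_m)=d_{n+1}$, and uniqueness via Proposition \ref{pl_ball}) that the paper leaves implicit in that remark.
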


\begin{corollary} Given a complete ultrametric space $(X,d)$ and the set of distances $\mathcal{D}(X,d) := \{ d(x,y) :\; x,y\in X \}$, if every strictly decreasing sequence $(d_n)_{n\in \NN}$ in $\mathcal{D}$  satisfies that $\lim_{n\to \infty}d_n=0$, then $(X,d)$ is pseudo-complete.
\end{corollary}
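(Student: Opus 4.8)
The plan is to reduce the corollary to the immediately preceding proposition, which asserts that a complete ultrametric space in which \emph{every} pseudopartnering sequence satisfies $\lim_{n\to\infty} d_n = 0$ is pseudo-complete. Since $(X,d)$ is assumed complete, it suffices to show that the hypothesis on $\mathcal{D}(X,d)$ — that every strictly decreasing sequence in $\mathcal{D}$ converges to $0$ — forces $\lim_{n\to\infty} d_n = 0$ for each pseudopartnering sequence $(x_n)_{n\in\NN}$, and then quote the proposition verbatim.

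First I would dispose of the finite case: if $(x_n)_{n\in\NN}$ is a finite pseudopartnering sequence, then by Definition \ref{d:distinguished_sequence} it is eventually constant with $d_n = 0$, so $\lim_{n\to\infty} d_n = 0$ trivially. Assume then that the sequence is infinite. By construction, for every $n$ the point $x_{n+1}$ is chosen inside the \emph{open} ball $B_{d_n}(x_n)$ with $x_{n+1}\neq x_n$, whence $0 < d_{n+1} = d(x_n, x_{n+1}) < d_n$. In particular each $d_{n+1}$ belongs to $\mathcal{D}(X,d)$, so $(d_n)_{n\geqslant 2}$ is a strictly decreasing sequence of positive elements of $\mathcal{D}$; by hypothesis $\lim_{n\to\infty} d_n = 0$.

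Having verified this for an arbitrary pseudopartnering sequence, the hypotheses of the preceding proposition are met, and we conclude that $(X,d)$ is pseudo-complete. There is no substantial obstacle here; the only point needing a moment's attention is confirming that $(d_n)_{n\geqslant 2}$ is genuinely strictly decreasing and $\mathcal{D}$-valued, which is immediate from the fact that $B_{d_n}(x_n)$ is an open ball, so any distance realized between $x_n$ and a point of it is strictly smaller than $d_n$.
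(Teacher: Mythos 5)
Your proposal is correct and follows exactly the route the paper intends: the corollary is stated without proof precisely because it reduces to the preceding proposition once one observes that the distance sequence $(d_n)_{n\geqslant 2}$ of any infinite pseudopartnering sequence is a strictly decreasing sequence in $\mathcal{D}(X,d)$ (since $x_{n+1}$ lies in the open ball $B_{d_n}(x_n)$), and finite sequences are trivially handled. Your write-up supplies the details the paper leaves implicit, with no gaps.
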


In particular, this gives the following corollary which can be interesting when ultrametric spaces arise as end spaces of trees. See \cite{H} for the missing definitions.

\begin{corollary} Given a geodesically complete simplicial tree $T$, then its end space with the usual metric at infinity is pseudo-complete.
\end{corollary}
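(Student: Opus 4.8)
The plan is to derive this from the Corollary just above, so it suffices to verify two things about the end space $\partial T$ equipped with the usual metric at infinity: that it is a complete ultrametric space, and that every strictly decreasing sequence of distances tends to $0$. Fix a root $v_0$ of $T$ and recall that for two ends $\xi,\eta$ the distance is a strictly decreasing function of the Gromov product $\langle\xi,\eta\rangle_{v_0}$, i.e.\ the length of the longest common initial segment of the geodesic rays from $v_0$ to $\xi$ and to $\eta$; with the standard normalization $d(\xi,\eta)=e^{-\langle\xi,\eta\rangle_{v_0}}$ (any of the usual normalizations behaves the same way). The decisive observation is that, since $T$ is simplicial, two distinct rays issuing from $v_0$ separate at a vertex, which lies at integer distance from $v_0$; hence $\langle\xi,\eta\rangle_{v_0}\in\NN\cup\{0\}$ and $\mathcal{D}(\partial T,d)\subseteq\{e^{-n}:n\in\NN\cup\{0\}\}\cup\{0\}$.

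From this both required properties follow with little effort. First, a strictly decreasing sequence in $\mathcal{D}(\partial T,d)$ must be of the form $(e^{-k_m})_{m\in\NN}$ with $(k_m)$ strictly increasing in $\NN\cup\{0\}$, so $k_m\to\infty$ and the sequence converges to $0$. For completeness I would argue as follows: given a Cauchy sequence of ends $(\xi_m)_{m\in\NN}$, let $\gamma_{\xi_m}\colon[0,\infty)\to T$ be the geodesic ray from $v_0$ to $\xi_m$. The Cauchy condition says exactly that these rays stabilize on larger and larger balls about $v_0$: for each $N$ there is $M_N$ (which we may take non-decreasing in $N$) with $\gamma_{\xi_m}|_{[0,N]}$ independent of $m\geq M_N$. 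Patching these stabilized initial segments produces a map $\gamma\colon[0,\infty)\to T$ which is a geodesic ray from $v_0$ — each of its finite initial segments is a restriction of some $\gamma_{\xi_m}$, and geodesic completeness of $T$ rules out any obstruction to the ray continuing — and the end $\xi$ it determines satisfies $d(\xi_m,\xi)\leq e^{-N}$ for $m\geq M_N$, so $\xi_m\to\xi$. Thus $(\partial T,d)$ is complete, and the Corollary above applies; (if $\partial T$ has fewer than two points the statement is trivial by Remark \ref{r:finite_complete}).

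The only delicate points are essentially bookkeeping: recording the exact normalization of the metric at infinity used in \cite{H}, and checking carefully that the patched object $\gamma$ is a genuine geodesic ray (equivalently, an honest end of $T$) rather than merely a union of consistent finite segments. Once one knows that the distance set of $\partial T$ sits inside $\{e^{-n}\}_{n\geq 0}\cup\{0\}$, whose only accumulation point is $0$, the hypotheses of the preceding Corollary are met and the conclusion is immediate.
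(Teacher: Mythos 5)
Your proof is correct and takes the same route the paper intends: the corollary is stated as an immediate consequence of the preceding one, and you verify its two hypotheses (that the end space is a complete ultrametric space, and that its distance set lies in $\{e^{-n}\}_{n\geq 0}\cup\{0\}$, so strictly decreasing distance sequences tend to $0$), filling in details the paper delegates to \cite{H}. The only cosmetic remark is that your patching argument already produces an infinite ray from the fact that each $\gamma_{\xi_m}$ is itself defined on all of $[0,\infty)$, so geodesic completeness is not really what makes the completeness step work, but this does not affect correctness.
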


\smallskip

Being pseudo-complete is a key property not only for the existence of finite metric basis. It also implies the existence of either a partnered or an unpartnered point.

\begin{proposition}\label{p:pseudo_infinite2} If an ultrametric space is not pseudo-complete, then there is no finite $k$-metric basis for any $k\geq 1$.
\end{proposition}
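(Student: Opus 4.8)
The plan is to use that an ultrametric space which is not pseudo-complete has a pseudopartnered point $x$ such that either (i) some $x$-pseudopartnering sequence has no pseudolimit, or (ii) every $x$-pseudopartnering sequence has infinitely many pseudolimits. Assuming for a contradiction that $S$ is a finite $k$-metric basis with $k\geq 1$, in each case I would produce two distinct points of $X$ that no element of $S$ distinguishes, contradicting that $S$ is even a $1$-metric generator.

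Case (i) is short. Such a sequence $(x_n)_{n\in\NN}$ cannot terminate (a terminated pseudopartnering sequence has its eventual constant value as a pseudolimit, by Remark \ref{r:plimit}), so it is infinite, and Proposition \ref{pl_ball} gives $\bigcap_n\overline{B}_{d_n}(x_n)=pl(x_n)=\emptyset$; the balls are nested since $d(x_n,x_{n+1})=d_{n+1}<d_n$ yields $\overline{B}_{d_{n+1}}(x_{n+1})\subseteq B_{d_n}(x_n)$. As $S$ is finite, each of its points leaves every $\overline{B}_{d_n}(x_n)$ from some index on, so $S\cap\overline{B}_{d_N}(x_N)=\emptyset$ for some $N$, hence $S\cap B_{d_N}(x_N)=\emptyset$; but $x_N\neq x_{N+1}$ lie in $B_{d_N}(x_N)$, so by Lemma \ref{l:distinguished} they are not distinguished by any point of $X\smallsetminus B_{d_N}(x_N)\supseteq S$.

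Case (ii) (so that (i) also holds) is the substantial one. Fixing such a sequence, $Y:=pl(x_n)=\bigcap_n\overline{B}_{d_n}(x_n)$ is infinite, so by Remark \ref{r:pl_ball} we must have $d:=\lim d_n>0$ and $Y=\overline{B}_d(z)$, an infinite closed ball of positive radius lying at distance $d_{n+1}>d$ from each $x_n$. If $S\cap Y=\emptyset$, the ultrametric inequality forces $d(s,u)=d(s,v)$ for every $s\in S$ and $u,v\in Y$, so $S$ distinguishes no pair of $Y$ and we are done. If $S\cap Y\neq\emptyset$, the key observation is that hypothesis (ii) prevents $Y$ from being ``shallow'': since all of $Y$ is at distance exactly $\min I_n$ from $x_n$, an $x$-pseudopartnering sequence can be steered to enter $Y$, and once inside $Y$ the minimizing set stays inside $Y$, so the sequence runs entirely within $Y$; were it able to reach a point isolated in a ball of positive radius, it would terminate (or detect an unpartnered point) and have at most one pseudolimit, contradicting (ii). Hence $Y$ contains a strictly decreasing chain of balls, and iterating — each step producing, by steering, an $x$-pseudopartnering sequence whose (infinite) pseudolimit set is a closed ball of strictly smaller radius, realized as $\bigcap_m B_{d_m}(\cdot)$ for that sequence — yields infinite closed balls $Y\supsetneq Y_1\supsetneq Y_2\supsetneq\cdots$. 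If some $Y_N$ is disjoint from $S$, then an open ball $B_{d_M}(\cdot)\supseteq Y_N$ of its defining sequence is disjoint from $S$ and has at least two points, and Lemma \ref{l:distinguished} finishes the argument exactly as in case (i). Otherwise $\bigcap_j Y_j$ meets $S$; as the radii decrease, such a common point is then an accumulation point of $X$, hence unpartnered (as noted after Definition \ref{d:partner}), which contradicts Proposition \ref{p:unpartnered_kmetricbasis} when $k>1$, and for $k=1$ one repeats the whole argument inside $Y_1$ with that point removed — the iteration is well founded because the radii strictly decrease and the finite set $S$ can absorb only finitely many such points — ending in a genuinely undistinguishable pair.

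The hard part is case (ii): being infinite is not by itself enough, since an infinite ``star-like'' ultrametric space can be resolved by a single point, so the argument must genuinely use that $Y$ arises as a pseudolimit set under (ii), which is what excludes the finite-depth configurations; keeping the iterated construction under control against the finiteness of $S$ (in particular for $k=1$) is the technical core.
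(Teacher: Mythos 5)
Your case (i) is sound and is essentially the paper's own first case: since $(x_n)$ has no pseudolimit, the finite set $S$ eventually exits the nested balls $\overline{B}_{d_n}(x_n)$, and Lemma \ref{l:distinguished} exhibits the undistinguished pair $x_N,x_{N+1}$. Case (ii) is where the proof genuinely diverges from the paper, and it has two real gaps. First, the trapped point $s\in S\cap\bigcap_j Y_j$ is claimed to be an accumulation point of $X$ ``as the radii decrease,'' but your construction only makes the radii $r_j$ of the balls $Y_j$ strictly decreasing; they may converge to some $\rho>0$, in which case $s$ need not be an accumulation point and the step ``hence unpartnered'' fails. The paper avoids this by zooming in with balls of radius $1/n$ each containing infinitely many pseudolimits (after first disposing of the subcase where some fixed scale $\delta$ partitions the pseudolimit set into infinitely many pieces), so the scale is forced to $0$ and the trapped point of $S$ really is an accumulation point.

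Second, and more seriously, your endgame for $k=1$ --- ``repeat the whole argument inside $Y_1$ with that point removed,'' justified by ``the radii strictly decrease'' --- is not a proof: a strictly decreasing sequence of positive reals is not well-founded, deleting a point of the space changes which sequences are pseudopartnering sequences (so hypothesis (ii) no longer applies to the modified space), and Proposition \ref{p:unpartnered_kmetricbasis} is unavailable for $k=1$. You flag this as the technical core but do not resolve it. The paper's route dissolves the difficulty entirely: once the trapped $s\in S$ is shown to be an unpartnered point that is itself a pseudolimit of $(x_n)$, the sequence $x,s,s,s,\dots$ is a valid \emph{finite} $x$-pseudopartnering sequence with exactly one pseudolimit, contradicting the case hypothesis that \emph{every} $x$-pseudopartnering sequence has infinitely many pseudolimits. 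The contradiction is thus with the case assumption itself rather than with the basis property, so no separate treatment of $k=1$ (and no iteration of shrinking balls) is needed. Your argument would need to be reorganized around this observation, or the decay of the radii to $0$ would have to be established, before case (ii) can be considered closed.
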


\begin{proof} First, notice that any $(k+1)$-metric basis is a $k$-metric generator. Therefore, $dim_k(X)\leq dim_{k+1}(X)$ for every $k\geq 1$ . Thus, it suffices to prove that there is no finite metric basis.

Suppose there is a pseudopartnered point $x$ and an infinite $x$-pseudopartnering sequence $(x_n)$ with no pseudolimit. Let $S=\{s_1,\dots,s_k\}$ be any finite family of points in $X$. Then, since $(x_n)$ has no pseudolimit, for every $1\leq i\leq k$ there is some $n_i\in \NN$ such that $d(s_i,x_{n_i})>d(x_{n_i},x_{n_{i}+1})$. Let $n_0=\max_i\{n_i\}$. Then, $d(s_i,x_{n_0})>d(x_{n_0},x_{n_{0}+1})$ for every $i$ and $d(s_i,x_{n_0})=d(s_i,x_{m})$ for every $m>n_0$. Therefore, $S$ is not a metric basis of $X$.

Suppose there is a pseudopartnered point $x$ such that every $x$-pseudopartnering sequence $(x_n)$ has an infinite family $Y=\{y_i\}_{i\in I}$ of pseudolimits. Let $S=\{s_1,\dots,s_k\}$ be any finite family of points in $X$ and let us suppose that $S$ distinguishes the points in $Y$.

If for some $\d>0$, $\{B_\d(y_i)\}_{i\in I}$ defines an infinite partition of $Y$, then, the result is trivial from Lemma \ref{l:distinguished} and the pigeonhole principle. Then, for every $n\in \NN$ we can choose inductively some $i_n\in I$ such that $B_{1/n}(y_{i_n})$ contains infinite points in $Y$ with $B_{1/{n+1}}(y_{i_{n+1}})\subset B_{1/n}(y_{i_n})$. By Lemma \ref{l:distinguished}, $S\cap B_{1/n}(y_{i_n})\neq \emptyset$ for every $i$. 
Thus, since $S$ is finite, $\cap_{n\in \NN}B_{1/n}(y_{i_n})\cap S \neq \emptyset$. Suppose, 
$s\in \cap_{n\in \NN}B_{1/n}(y_{i_n})\cap S$. Then, since there are points in $Y$ arbitrarily close to $s$, $s$ is unpartnered. Also, since $s$ is arbitrarily close to some pseudolimit of $(x_n)$, then $d(s,x_n)=d(x_n,x_{n+1})$ for every $n$. Thus, $x,s,s,s,\dots$ defines a (finite) convergent pseudopartnering sequence leading to contradiction.
\end{proof}

\begin{proposition}\label{p:partner_exist}Let $x$ be a pseudopartnered point in an ultrametric space $(X,d)$, and $(x_n)_{n\in\mathbb{N}}$ be a pseudoconvergent $x$-pseudopartnering sequence with a finite number of pseudolimits. Then, there is a pseudolimit $ z $ of $ (x_n) $ that is either partnered or unpartnered (not pseudopartnered).
\end{proposition}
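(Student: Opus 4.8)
The plan is to split on whether the given $x$-pseudopartnering sequence $(x_n)$ is \emph{finite} (eventually constant) or \emph{infinite}, and then to reduce the hardest subcase of the infinite case back to the finite one. Throughout I will use that, as long as the sequence is non-constant, $(d_n)$ is strictly decreasing and $d(x_i,x_j)=d_{i+1}$ for $j>i$, together with Proposition~\ref{pl_ball}, so that $pl(x_n)=\bigcap_{n}\overline{B}_{d_n}(x_n)$ is a decreasing intersection.

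\emph{The finite case.} Since $x$ is pseudopartnered, $I_1=\{d(x,z):z\neq x\}$ has a minimum (otherwise Definition~\ref{d:distinguished_sequence} would declare $x$ unpartnered), so the construction is non-constant at the first step; let $N\geq 2$ be the least index with $x_{N+1}=x_N$, so $d_N=d(x_{N-1},x_N)>0$. Then the construction stopped at step $N$ either because $I_N$ has no minimum --- and then Definition~\ref{d:distinguished_sequence} itself records that $x_N$ is unpartnered --- or because $|B_{d_N}(x_N)|=1$. In the second case I would show that $x_N$ and $x_{N-1}$ are partners: $|B_{d_N}(x_N)|=1$ forces $d(x_N,z)\geq d_N$ for every $z\neq x_N$, with equality at $x_{N-1}$; and if $z\neq x_{N-1},x_N$, then when $d(x_N,z)>d_N$ the short-base property (Proposition~\ref{p:elementals}(3)) applied to $x_{N-1},x_N,z$ gives $d(x_{N-1},z)=d(x_N,z)>d_N$, while when $d(x_N,z)=d_N$ the same triangle forces $d(x_{N-1},z)\leq d_N$, and since then $z\in B_{d_{N-1}}(x_{N-1})\smallsetminus\{x_{N-1}\}$ we also get $d(x_{N-1},z)\in I_{N-1}$, hence $d(x_{N-1},z)\geq\min I_{N-1}=d_N$. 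In all cases $d(x_{N-1},z)\geq d_N=d(x_{N-1},x_N)$, which is the partner condition. Finally $d_{N+1}=0$ gives $pl(x_n)\subseteq\overline{B}_{d_{N+1}}(x_{N+1})=\{x_N\}$ while $x_N$ lies in every $\overline{B}_{d_n}(x_n)$, so $pl(x_n)=\{x_N\}$ and the proposition holds with $z=x_N$.

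\emph{The infinite case.} Now $(d_n)$ is strictly decreasing and positive, say with limit $d\geq 0$, and no $x_n$ is a pseudolimit (if $x_{n_0}\in pl(x_n)$ then $d_{n_0+1}=d(x_{n_0},x_{n_0})=0$). If $d=0$, then by Remark~\ref{r:pl_ball} and pseudoconvergence $pl(x_n)=\{z\}$ with $z=\lim x_n$; if $z$ were partnered or pseudopartnered there would be $w$ with $d(z,p)\geq d(z,w)>0$ for all $p\neq z,w$, so $d_{n+1}=d(z,x_n)\geq d(z,w)>0$ for all but at most one $n$, contradicting $d_n\to 0$, so $z$ is unpartnered. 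If $d>0$, suppose for contradiction that every point of the finite nonempty set $pl(x_n)$ is pseudopartnered, pick $z\in pl(x_n)$, and form a $z$-pseudopartnering sequence $(z_m)$ with distances $(e_m)$. Exactly as in the $d=0$ estimate, $e_2=\min\{d(z,p):p\neq z\}\leq d$, so $d(z,z_m)=e_2\leq d$ for every $m\geq 2$, whence $z_m\in\overline{B}_{d}(z)=pl(x_n)$ by Remark~\ref{r:pl_ball}. Thus $(z_m)$ is a pseudopartnering sequence lying entirely in $pl(x_n)$, so all its terms are pseudopartnered; by the finite case it cannot terminate (its limit point would then be partnered or unpartnered yet belong to $pl(x_n)$), so it is infinite, hence its terms are pairwise distinct --- infinitely many points in the finite set $pl(x_n)$, a contradiction. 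Therefore some pseudolimit is partnered or unpartnered.

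The step I expect to be the real obstacle is the assertion in the finite case that stopping because $|B_{d_N}(x_N)|=1$ forces $x_N$ to be genuinely \emph{partnered} and not merely pseudopartnered; this is the only place requiring an honest ultrametric computation, and it is precisely what makes the statement true. The remaining work is bookkeeping: the $d=0$ subcase is a one-line estimate, and the $d>0$ subcase is just the observation that a pseudopartnering sequence trapped in a finite set must terminate, which throws us back to the finite case.
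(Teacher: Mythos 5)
Your proof is correct and follows essentially the same route as the paper's: split on $\lim_{n\to\infty} d_n=0$ versus $\lim_{n\to\infty} d_n>0$, handle the terminating case directly, and in the positive-limit case trap a $z$-pseudopartnering sequence inside the finite set $\overline{B}_d(z)=pl(x_n)$ so that it must terminate, reducing back to the first case. The only difference is that you spell out the step the paper dismisses as ``immediate by construction'' --- the ultrametric verification that a sequence halting because $|B_{d_N}(x_N)|=1$ ends at a genuinely partnered point $x_N$ (partnered with $x_{N-1}$) --- and that computation is sound.
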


\begin{proof} First, suppose $lim_{n\to \infty} d_n=0$. If the sequence $(x_n)$ is finite, then the proof is immediate by construction. Suppose that the sequence is infinite. Then, by Remark \ref{r:pl_ball}, let $z=lim_{n\to \infty} x_n$ be the unique pseudolimit. Thus, given any point $y\in X$ there is $n_0 \in\mathbb{N}$ such that $ 0 < d(z,x_{n_0}) < d(z,y) $ and $z$ is unpartnered.

Now suppose $lim_{n\to \infty} d_n>0$. Consider any $z\in pl(x_n)=\{z_1,\dots,z_k\}$ and let $d=lim_{n\to \infty}d_n$. By Proposition \ref{pl_ball} and Remark \ref{r:pl_ball}, $pl(x_n)=\cap_{n\in \NN}\overline{B}_{d_n}(x_n)=\overline{B}_d(z)=\{z_1,\dots,z_k\}$. If $z$ is not pseudopartnered, we are done. Otherwise, consider any $z$-pseudopartnering sequence $(z_n)$. By construction, $(z_n)\subset \overline{B}_d(z)=\{z_1,\dots,z_k\}$ and therefore, $(z_n)$ is finite. Hence, it is immediate that $lim_{n\to \infty}(z_n)=z_i$ for some $1\leq i\leq k$ and $z_i$ is either partnered or unpartnered.
\end{proof}

Notice that finite ultrametric spaces are pseudo-complete and do not have unpartnered points.

\begin{corollary}\label{c:partner_exist}Let $(X,d)$ be a pseudo-complete ultrametric space. If there are not unpartened points in $(X,d)$, then there exist partner points.
\end{corollary}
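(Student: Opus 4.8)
The plan is to reduce everything to Proposition \ref{p:partner_exist} via a short argument by contradiction. Assume $(X,d)$ is pseudo-complete, has no unpartnered points, and — for contradiction — has no partner points at all. The first step is the observation that under the trichotomy of Definition \ref{d:partner}, every point of $X$ must then be pseudopartnered: a point cannot be partnered, since that would by definition produce a pair of partners, and it cannot be unpartnered by hypothesis.

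Next I fix any point $x \in X$ (recall the standing assumption that $|X| \geq 2$). Since $x$ is pseudopartnered and $(X,d)$ is pseudo-complete, Definition \ref{d:complete} supplies an $x$-pseudopartnering sequence $(x_n)_{n \in \NN}$ which has a pseudolimit (condition (i)) and has only finitely many pseudolimits (condition (ii)); in particular $(x_n)$ is a pseudoconvergent $x$-pseudopartnering sequence with a finite set of pseudolimits. This is exactly the hypothesis of Proposition \ref{p:partner_exist}, which then yields a pseudolimit $z$ of $(x_n)$ that is either partnered or unpartnered (and not merely pseudopartnered).

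Finally I rule out both options for $z$: it cannot be unpartnered, since $X$ has no unpartnered points, and it cannot be partnered, since we assumed $X$ has no partner points. This contradiction establishes the corollary. As a variant one may phrase it without contradiction: if $X$ already contains a partnered point we are done immediately, and otherwise all points are pseudopartnered and the same invocation of Proposition \ref{p:partner_exist} produces a partnered point directly.

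I do not expect any genuine obstacle here, since the substantive work has already been carried out in Proposition \ref{p:partner_exist} and absorbed into Definition \ref{d:complete}; the corollary is essentially a bookkeeping step on the partnered/pseudopartnered/unpartnered classification. The only point deserving a moment's care is to confirm that pseudo-completeness furnishes a \emph{single} $x$-pseudopartnering sequence meeting simultaneously the two hypotheses of Proposition \ref{p:partner_exist} — and it does, because condition (ii) of Definition \ref{d:complete} guarantees at least one sequence with finitely many pseudolimits while condition (i) guarantees that this (in fact every) sequence is pseudoconvergent.
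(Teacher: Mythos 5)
Your argument is correct and is exactly the deduction the paper intends (the corollary is stated without proof immediately after Proposition \ref{p:partner_exist}): by the trichotomy of Definition \ref{d:partner} every point is partnered, pseudopartnered, or unpartnered, so under the hypotheses either a partnered point already exists or some point is pseudopartnered, and in the latter case pseudo-completeness supplies a pseudoconvergent pseudopartnering sequence with finitely many pseudolimits to which Proposition \ref{p:partner_exist} applies, yielding a partnered point since unpartnered ones are excluded. Your closing remark about needing a single sequence satisfying both hypotheses of that proposition is a worthwhile check and is handled correctly.
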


\smallskip

Let us denote $ P(X) $ the set of partnered points and $ UP(X) $ the set of unpartnered points in $(X,d)$.

\begin{remark}Given a pseudo-complete ultrametric space $(X,d)$, then $P(X)\cup UP(X) \neq \emptyset$. However, there are ultrametric spaces without partnered nor unpartnered points, see Example \ref{e:natural}.
\end{remark}

\smallskip

Thus, we can conclude the following:

\begin{theorem}\label{t:kmetric3} If $(X,d)$ is an ultrametric space, then there is no finite $k$-metric basis for $k>2$.
\end{theorem}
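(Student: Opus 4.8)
The plan is to break into cases according to which kind of ``special'' point the space is forced to contain, and in each case produce a pair of points that only $2$ points of $X$ can distinguish, which prevents the existence of a finite (indeed, any) $k$-metric basis for $k > 2$. First I would dispose of the easy structural dichotomy: either $(X,d)$ is pseudo-complete or it is not. If $X$ is not pseudo-complete, then Proposition \ref{p:pseudo_infinite2} already gives that there is no finite $k$-metric basis for any $k \geq 1$, so in particular none for $k > 2$, and we are done. Hence for the rest of the argument I may assume $X$ is pseudo-complete.

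Next, having assumed $X$ pseudo-complete, I would invoke the Remark following Corollary \ref{c:partner_exist}: a pseudo-complete ultrametric space satisfies $P(X) \cup UP(X) \neq \emptyset$, so either there is a partnered point or there is an unpartnered point. If there is an unpartnered point $x \in X$, then by Proposition \ref{p:unpartnered_kmetricbasis} every $k$-metric basis is infinite for every $k > 1$; in particular there is no finite $k$-metric basis for $k > 2$ (and one should note that ``no finite $k$-metric basis'' together with the monotonicity $dim_k \leq dim_{k+1}$ forces there to be no $k$-metric basis at all, matching the statement). If instead there is a partnered point $x \in X$, pick a partner $y$; then for every $z \in X \smallsetminus \{x,y\}$ we have $d(x,z) \geq d(x,y)$ and $d(y,z) \geq d(x,y)$, and the ultrametric (isosceles) inequality forces $d(x,z) = d(y,z)$, so $z$ does not distinguish $x$ from $y$. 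Thus the only points distinguishing $x$ and $y$ are $x$ and $y$ themselves: $x,y$ are distinguished by exactly $2$ elements of $X$. Consequently no subset $S \subseteq X$ can distinguish the pair $x,y$ by $k > 2$ of its elements, so there is no $k$-metric generator at all, hence no $k$-metric basis. This is exactly the content of Proposition \ref{p:partnered_kmetricbasis}, which I would simply cite.

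Assembling these pieces: given any ultrametric space $(X,d)$ (with at least two points, as standing assumption) and any $k > 2$, either $X$ fails to be pseudo-complete — and Proposition \ref{p:pseudo_infinite2} finishes it — or $X$ is pseudo-complete, in which case $P(X) \cup UP(X) \neq \emptyset$ and either Proposition \ref{p:unpartnered_kmetricbasis} (if $UP(X) \neq \emptyset$) or Proposition \ref{p:partnered_kmetricbasis} (if $P(X) \neq \emptyset$) shows no finite $k$-metric basis exists. In all cases there is no finite $k$-metric basis for $k > 2$, which is the assertion. I do not expect any real obstacle here: the genuine work has already been done in the preceding propositions, and the theorem is essentially a packaging statement. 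The only point requiring mild care is making sure the case division ``not pseudo-complete / pseudo-complete with unpartnered point / pseudo-complete with partnered point'' is exhaustive, which is guaranteed precisely by the Remark after Corollary \ref{c:partner_exist} (itself resting on Corollary \ref{c:partner_exist} and Proposition \ref{p:partner_exist}).
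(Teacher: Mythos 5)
Your proof is correct and follows essentially the same route as the paper: split on pseudo-completeness (Proposition \ref{p:pseudo_infinite2}), then use that a pseudo-complete space has a partnered or unpartnered point, and finish with Proposition \ref{p:unpartnered_kmetricbasis} or Proposition \ref{p:partnered_kmetricbasis} respectively. The paper cites Proposition \ref{p:partner_exist} where you cite the Remark after Corollary \ref{c:partner_exist}, but these rest on the same fact, so the arguments coincide.
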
  

\begin{proof} If $(X,d)$ is not pseudo-complete, it follows from  Proposition \ref{p:pseudo_infinite2}. If $(X,d)$ is pseudo-complete, from Proposition \ref{p:partner_exist}, there is either a partnered or an unpartnered point in $(X,d)$. If there is an unpartnered point, the result follows from  Proposition \ref{p:unpartnered_kmetricbasis}. If there is a partnered point, the result follows from Proposition \ref{p:partnered_kmetricbasis}.
\end{proof}

\smallskip

Let us characterize now which are the metric and 2-metric bases of a pseudo-complete ultrametric space and compute the corresponding metric and 2-metric dimension.

\begin{theorem}\label{t:no_unpartnered_2metricbasis2}If $(X,d)$ is a pseudo-complete ultrametric space without unpartnered points, then $P(X)$ is its unique 2-metric basis.
\end{theorem}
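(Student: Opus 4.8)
The plan is to establish two facts: every 2-metric generator of $X$ must contain $P(X)$, and $P(X)$ is itself a 2-metric generator. Together these force $P(X)$ to be the unique 2-metric basis, since any 2-metric generator different from $P(X)$ strictly contains it and hence cannot have smaller cardinality. The first fact is immediate from Remark \ref{r:partner_12metricbasis}: if $x,y$ are partners, then, exactly as in the proof of Proposition \ref{p:partnered_kmetricbasis}, no point of $X\smallsetminus\{x,y\}$ distinguishes $x$ from $y$, so both $x$ and $y$ must belong to any 2-metric generator. Thus all the work goes into proving that $P(X)$ distinguishes every pair of distinct points by at least two of its elements.

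Fix distinct $x,y\in X$ and set $r=d(x,y)$. Using Proposition \ref{p:elementals} one checks that $B_r(x)\cap B_r(y)=\emptyset$ and that a point $w\in X$ distinguishes $x$ from $y$ precisely when $w\in B_r(x)\cup B_r(y)$; so it suffices to show that $B_r(x)\cup B_r(y)$ contains at least two points of $P(X)$. I would split into two cases. If $B_r(x)=\{x\}$ and $B_r(y)=\{y\}$, then $d(x,w)\geqslant r$ for every $w\neq x$ and $d(y,w)\geqslant r$ for every $w\neq y$, which is exactly the condition for $x$ and $y$ to be partners; hence $x,y\in P(X)$ and we are done. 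Otherwise one of the two balls, call it $B$, satisfies $|B|\geqslant 2$, and everything reduces to the following claim.

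\emph{Claim:} every ball $B$ with $|B|\geqslant 2$ contains at least two distinct partnered points. To prove it, write $B=B_s(c)$; since $|B|\geqslant 2$ there is a point at distance $<s$ from $c$, so $m:=\min\{d(c,w):w\neq c\}$ is well defined and $m<s$. First I would produce one partnered point $p\in B$: if $c\in P(X)$ take $p=c$; otherwise $c$ is pseudopartnered (there are no unpartnered points), so by pseudo-completeness $c$ admits a pseudoconvergent pseudopartnering sequence $(c_n)$ with finitely many pseudolimits, and by Proposition \ref{p:partner_exist} one of its pseudolimits $z$ is partnered or unpartnered, hence partnered; since $d(z,c)=d(z,c_1)=d_2=m<s$ we get $z\in B$, so take $p=z$. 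Now let $q$ be a partner of $p$, so $q\in P(X)$ and $q\neq p$; it remains to show $q\in B$. If $q=c$ this is trivial; if $q\neq c$ and $p\neq c$, the partner inequality gives $d(p,q)\leqslant d(p,c)<s$ and hence $d(q,c)\leqslant\max\{d(q,p),d(p,c)\}<s$; and if $q\neq c$ and $p=c$, the partner inequality gives $d(c,q)\leqslant d(c,w)$ for all $w\in X\smallsetminus\{c,q\}$, so $d(c,q)=m<s$. In all cases $q\in B$, which proves the claim; then $p,q$ are two points of $P(X)$ lying in $B\subseteq B_r(x)\cup B_r(y)$, and we conclude.

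The ultrametric computations (disjointness of the two balls, the description of the distinguishing points, the repeated use of the isosceles-with-short-base property) are routine. The hard part will be the Claim: it is exactly there that pseudo-completeness and the absence of unpartnered points are used in tandem—through Proposition \ref{p:partner_exist}—to force a partnered point into an arbitrary nontrivial ball, and one then has to run the short case analysis on whether that point is the center of the ball to guarantee that its partner is trapped in the same ball.
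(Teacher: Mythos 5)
Your proof is correct, and its engine is the same as the paper's: necessity of $P(X)$ comes from Remark \ref{r:partner_12metricbasis}, and sufficiency ultimately rests on using pseudo-completeness together with Proposition \ref{p:partner_exist} to plant a partnered point close to any non-partnered point, and then the isosceles property to drag its partner along. The difference is organizational. The paper works directly with a witness $z$ satisfying $d(y,z)<d(x,z)=d(x,y)$, produces a partnered pseudolimit $y'$ with $d(y',y)\leq d(z,y)<d(x,y)$, and checks by hand that $y'$ and its partner $y''$ both distinguish $x$ from $y$. You instead observe that the set of points distinguishing $x$ from $y$ is exactly $B_r(x)\cup B_r(y)$ with $r=d(x,y)$, and reduce everything to a standalone claim --- every ball with at least two points contains at least two partnered points --- which you prove with the same tools plus a short case analysis on whether the partnered point is the center. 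This costs a little extra bookkeeping but yields a cleaner, reusable geometric statement, and it makes transparent exactly where pseudo-completeness and the absence of unpartnered points enter.

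One point to tighten: you justify the existence of $m=\min\{d(c,w):w\neq c\}$ by saying that $|B|\geq 2$ makes the set of distances nonempty, but a nonempty set of positive reals need not attain its infimum. The minimum does exist here, but for a different reason: since $X$ has no unpartnered points, $c$ is either partnered or pseudopartnered, and in both cases Definition \ref{d:partner} provides a point $y$ realizing $d(c,y)\leq d(c,w)$ for all $w\neq c,y$, i.e.\ the minimum is attained. State that explicitly; as written, the step would fail in a general ultrametric space (e.g.\ at an unpartnered point), and it is precisely the hypothesis of the theorem that rescues it.
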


\begin{proof} By Remark \ref{r:partner_12metricbasis}, $P(X)$ is contained in every 2-metric basis of $(X,d)$. Let us see that $P(X)$ is a 2-metric generator. Suppose $x,y\in X$. If $x,y$ are partners, it is trivial that there are two points in $P(X)$, $x,y$, distinguishing them. Otherwise, suppose (without loss of generality) that there exists $z\in X$ such that $d(y,z)<d(x,z)=d(y,x)$. If $y\in P(X)$, let $y'$ be a partner of $y$ and then $y,y'\in P(X)$ distinguish $x,y$. If $y\notin P(X)$, since $(X,d)$ is pseudo-complete, there is a pseudoconvergent $y$-pseudopartnering sequence $(y_n)_{n\in \NN}$. 
Since $X$ has not unpartnered points, by Proposition \ref{p:partner_exist}, there is some $y'\in pl(y_n)\cap P(X)$. Then, $d(y',y)\leq d(z,y)<d(z,x)=d(y,x)=d(y',x)$ and $y'$ distinguishes $x$ and $y$. Let $y''\in P(X)$ such that $y'$ and $y''$ are partners. Then, by the properties of the ultrametric, $d(y',y'')\leq d(y',y)<d(y,x)=d(y',x)=d(y'',x)$ and $y''$ also distinguishes $x$ and $y$.  
\end{proof}

\begin{corollary}\label{c:2dim}If $(X,d)$ is a pseudo-complete ultrametric space without unpartnered points, then 
$$Dim(X) = 2 \ \mbox{ and } \ \dim_2(X) = | P(X) |.$$
\end{corollary}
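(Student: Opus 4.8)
The plan is to deduce both assertions directly from results already in hand, chiefly Theorem \ref{t:no_unpartnered_2metricbasis2}, Corollary \ref{c:partner_exist}, and Proposition \ref{p:partnered_kmetricbasis}; no new computation is really needed.

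First I would dispose of the formula for the $2$-metric dimension. Since Theorem \ref{t:no_unpartnered_2metricbasis2} identifies $P(X)$ as the (unique) $2$-metric basis of $(X,d)$, the definition of $\dim_2$ gives at once $\dim_2(X) = |P(X)|$. Note this is purely a matter of unwinding definitions and holds regardless of whether $P(X)$ is finite.

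Next I would establish $Dim(X) = 2$ by bounding it on both sides. For the lower bound, Theorem \ref{t:no_unpartnered_2metricbasis2} already exhibits a $2$-metric basis (namely $P(X)$), so $2$ is among the integers $k$ for which a $k$-metric basis exists, whence $Dim(X) \ge 2$. For the upper bound, I would invoke Corollary \ref{c:partner_exist}: a pseudo-complete ultrametric space without unpartnered points contains a pair of partners, so $P(X) \neq \emptyset$; fixing a partnered point $x \in X$ and applying Proposition \ref{p:partnered_kmetricbasis}, there is no $k$-metric basis for any $k > 2$. Therefore $2$ is the largest integer admitting a $k$-metric basis, which is precisely the statement $Dim(X) = 2$.

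The whole argument is essentially bookkeeping, so there is no genuine obstacle. The only points that need attention are: using Proposition \ref{p:partnered_kmetricbasis} in its full strength — it rules out $k$-metric bases for $k>2$ outright, not just finite ones, which is exactly what is required for the upper bound on $Dim(X)$ — and checking that the hypotheses of Corollary \ref{c:partner_exist} (pseudo-completeness and absence of unpartnered points) are in force so that a partnered point is actually available to feed into Proposition \ref{p:partnered_kmetricbasis}.
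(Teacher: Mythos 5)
Your proposal is correct and matches the paper's intended derivation: the paper states this as an immediate corollary of Theorem \ref{t:no_unpartnered_2metricbasis2}, with the bound $Dim(X)\le 2$ coming from Corollary \ref{c:partner_exist} together with Proposition \ref{p:partnered_kmetricbasis}, exactly as you argue. No issues.
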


\begin{corollary}\label{c:2dim_finite}If $(X,d)$ is a finite ultrametric space, then $P(X)$ is its unique 2-metric basis. In particular, 
$$Dim(X) = 2 \ \mbox{ and } \ \dim_2(X) = | P(X) |.$$
\end{corollary}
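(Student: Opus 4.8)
The final statement, Corollary \ref{c:2dim_finite}, asserts that a finite ultrametric space has $P(X)$ as its unique 2-metric basis, with $Dim(X)=2$ and $\dim_2(X)=|P(X)|$. The plan is to deduce this directly from the immediately preceding results by verifying that a finite ultrametric space satisfies the hypotheses of Corollary \ref{c:2dim}.

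First I would invoke Remark \ref{r:finite_complete}, which states that every finite ultrametric space is pseudo-complete, so the first hypothesis is automatic. Second, I would recall the Remark following Definition \ref{d:partner}: in a finite ultrametric space there are no unpartnered points. For completeness one could note why this holds: given any $x\in X$, since $X\smallsetminus\{x\}$ is finite and nonempty, the set of distances $\{d(x,z): z\in X\smallsetminus\{x\}\}$ has a positive minimum realized at some $y$, so $d(x,z)\ge d(x,y)$ for all $z\in X\smallsetminus\{x,y\}$ and $x$ is at worst pseudopartnered, never unpartnered. Hence $UP(X)=\emptyset$.

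With both hypotheses of Corollary \ref{c:2dim} verified (and of Theorem \ref{t:no_unpartnered_2metricbasis2}), the conclusion follows: $P(X)$ is the unique 2-metric basis, $Dim(X)=2$, and $\dim_2(X)=|P(X)|$. There is essentially no obstacle here — the content is entirely in Theorem \ref{t:no_unpartnered_2metricbasis2} and Corollary \ref{c:2dim}, and this corollary is just the specialization to the finite case. The only point requiring a word of care is confirming that a finite ultrametric space genuinely has no unpartnered points, which is the argument sketched above; everything else is a direct citation.

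\begin{proof} By Remark \ref{r:finite_complete}, $(X,d)$ is pseudo-complete. Moreover, $(X,d)$ has no unpartnered points: given $x\in X$, the set $X\smallsetminus\{x\}$ is finite and nonempty, so there is $y\in X\smallsetminus\{x\}$ with $0<d(x,y)=\min_{z\in X\smallsetminus\{x\}}d(x,z)$, whence $d(x,z)\ge d(x,y)$ for all $z\in X\smallsetminus\{x,y\}$ and $x$ is partnered or pseudopartnered. Thus the hypotheses of Theorem \ref{t:no_unpartnered_2metricbasis2} and Corollary \ref{c:2dim} are satisfied, and the conclusions follow.
\end{proof}
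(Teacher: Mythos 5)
Your proof is correct and matches the paper's intended derivation: the paper states this corollary without proof precisely because it follows from Theorem \ref{t:no_unpartnered_2metricbasis2} and Corollary \ref{c:2dim} once one notes (Remark \ref{r:finite_complete} and the remark after Definition \ref{d:partner}) that finite ultrametric spaces are pseudo-complete and have no unpartnered points. Your explicit verification that $UP(X)=\emptyset$ via the minimum of the finite distance set is exactly the right justification.
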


Notice that being partners is an equivalence relation. Let us define
\[\mathfrak{P}(X) = P(X)/\sim\,\]
where $x\sim z$ if $x,z$ are partners. If $x\in X$ is a partnered point, we denote 
$$[x] := \{ z\in X \,|\, x,z \text{ are partners} \} \cup \{x\}.$$

\begin{theorem}\label{t:no_unpartnered_metricbasis}Let $(X,d)$ be a pseudo-complete ultrametric space without unpartnered points and $\mathfrak{P}(X) = \{ [x_i] \, | \, i\in I \}$. $S$ is a metric basis of $(X,d)$ if and only if there is $(\alpha _i)_{i\in I} \in \prod\limits_{i\in I} [x_i] $ such that $ S = \bigcup\limits _{i\in I} \Big([x_i] \smallsetminus \{\alpha _i \}\Big)$.
\end{theorem}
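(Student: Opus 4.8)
The plan is to deduce the statement from two facts about metric generators of $X$: \textbf{(A)} every set of the form $S_0=\bigcup_{i\in I}\big([x_i]\smallsetminus\{\alpha_i\}\big)$, with $(\alpha_i)_{i\in I}\in\prod_{i\in I}[x_i]$, is a metric generator; and \textbf{(B)} every metric generator $S$ of $X$ omits at most one point of each class, i.e.\ $|[x_i]\smallsetminus S|\le 1$ for all $i\in I$. Granting these, the characterization is a counting argument: by (B) every metric generator has cardinality at least $\sum_{i\in I}\big(|[x_i]|-1\big)$, and by (A) this value is attained, so the metric bases are exactly the metric generators of this minimal size. If $S$ is a metric basis, then for each $i$ the set $[x_i]\smallsetminus S$ is empty or a singleton; choosing $\alpha_i$ in it (or arbitrarily in $[x_i]$ if empty) gives $S_0:=\bigcup_{i\in I}\big([x_i]\smallsetminus\{\alpha_i\}\big)\subseteq S\cap P(X)\subseteq S$, and since $S_0$ is a metric generator by (A) and has minimal size, the minimality of $S$ forces $S=S_0$; in particular $S\subseteq P(X)$ and $S$ has the claimed form. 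The converse direction is exactly (A) together with the same count. I expect (A) to be the crux; (B) and the counting are routine.

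Fact (B) is immediate: two distinct points of a single class $[x_i]$ are partners, and, exactly as in the proof of Proposition \ref{p:partnered_kmetricbasis}, a pair of partners is distinguished only by the two points themselves; hence a metric generator must contain at least one point of each such pair, which is the same as omitting at most one point of $[x_i]$.

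For fact (A) I would first record the key structural feature of a partner class, obtained by combining the partner inequality with the fact that every triangle is isosceles with short base (Proposition \ref{p:elementals}(3)): if $\beta,\beta'$ are distinct points of a class $[x_i]$ and $r\in X\smallsetminus\{\beta,\beta'\}$, then $d(r,\beta)=d(r,\beta')$; in particular, when $|[x_i]|\ge 3$, all distances inside $[x_i]$ equal a common value $\rho_i>0$. Now fix distinct $p,q\in X$. By Theorem \ref{t:no_unpartnered_2metricbasis2}, $P(X)$ is a $2$-metric generator, so $p$ and $q$ are distinguished by at least two points of $P(X)=\bigcup_{i\in I}[x_i]$. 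If $p$ and $q$ lie in the same class they are partners, $\{p,q\}$ is exactly the set of points distinguishing them, and $S_0$ retains one of $p,q$ (it omits only $\alpha_i$), which suffices.

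If instead $p$ and $q$ lie in different classes, I would examine, class by class, the set $D_i$ of points of $[x_i]$ that distinguish $p$ from $q$: the structural feature above shows that $D_i$ is empty, or all of $[x_i]$, or — only when exactly one of $p,q$ lies in $[x_i]$ and realizes the critical distance $\rho_i$ to the other — the singleton $\{p\}$ or $\{q\}$. Hence, if $S_0$ failed to distinguish $p,q$, each nonempty $D_i$ would have to equal $\{\alpha_i\}$, forcing it to be one of the singletons $\{p\},\{q\}$ with $p=\alpha_{i_0}$ and $q=\alpha_{j_0}$ for some $i_0\ne j_0$; then the full set of points distinguishing $p$ and $q$ would be $\{p,q\}$, which by the ultrametric inequality makes $p$ and $q$ partners, contradicting that they belong to different classes. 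So $S_0$ distinguishes every pair, proving (A). The delicate point of the whole proof is precisely this last case analysis: pinning down the critical-distance singletons and extracting the coincidence $p=\alpha_{i_0}$, $q=\alpha_{j_0}$ that produces the contradiction.
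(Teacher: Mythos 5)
Your proposal is correct in outline but takes a genuinely different route from the paper at the crux. The paper proves that $S_0=\bigcup_{i\in I}\big([x_i]\smallsetminus\{\alpha_i\}\big)$ is a metric generator by a direct argument on an arbitrary pair $x,y$: when neither point is partnered it runs a pseudopartnering sequence from each (via Proposition \ref{p:partner_exist}) to produce a partnered point $w$ with $d(w,y)<d(w,x)$ and then passes to a partner $w'\in S_0$ of $w$. You instead invoke Theorem \ref{t:no_unpartnered_2metricbasis2} ($P(X)$ is a $2$-metric generator) as a black box and analyse, class by class, the set $D_i$ of points of $[x_i]$ separating a fixed pair $p,q$; your observation that $D_i$ is empty, all of $[x_i]$, or a singleton $\{p\}$ or $\{q\}$ is correct, and this case analysis is in fact sharper than the corresponding step of the paper (whose claim that any $x'\in[x]\cap S$ satisfies $d(x',x)<d(x',y)$ can fail when $y$ lies in a different class whose partner distance makes $d(x',x)=d(x',y)$). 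The ``only if'' direction (Remark \ref{r:partner_12metricbasis} plus minimality of a basis) is the same in both; note that both you and the paper silently use that a proper subset has strictly smaller cardinality, which is only legitimate when the classes are finite.

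One step of yours needs repair. In the final sub-case you assert that if every nonempty $D_i$ equals $\{\alpha_i\}$ then ``the full set of points distinguishing $p$ and $q$ would be $\{p,q\}$,'' and from this that $p,q$ are partners. Your $D_i$ analysis only controls distinguishers lying in $P(X)$, so the quoted claim does not follow as stated: a point of $X\smallsetminus P(X)$ could a priori separate $p$ and $q$. The intended conclusion is nevertheless reachable without it: $D_{i_0}=\{p\}$ forces $d(\beta,q)=d(p,q)$ to coincide with $d(\beta,p)=\rho_{i_0}$ for $\beta\in[x_{i_0}]\smallsetminus\{p\}$, i.e. $d(p,q)=\rho_{i_0}$ (the common in-class distance), and symmetrically $d(p,q)=\rho_{j_0}$; since $p$ (resp. $q$) is partnered at distance $\rho_{i_0}$ (resp. $\rho_{j_0}$), every $z\neq p$ satisfies $d(p,z)\geq\rho_{i_0}=d(p,q)$ and every $z\neq q$ satisfies $d(q,z)\geq\rho_{j_0}=d(p,q)$, which is precisely the definition of $p$ and $q$ being partners and contradicts $[p]\neq[q]$. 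With that substitution your argument is complete.
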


\begin{proof} By Remark \ref{r:partner_12metricbasis}, if $S$ is a metric basis of  $(X,d)$, then for every $x_i\in P(X)$ there is at most one point $\alpha_i\in [x_i]$ such that $\alpha_i\notin S$. Thus, there is $(\alpha _i)_{i\in I} \in \prod\limits_{i\in I} [x_i] $ such that $ \bigcup\limits _{i\in I} [x_i] \smallsetminus \{\alpha _i \} \subset S$. Then, it suffices to check that given any $(\alpha _i)_{i\in I} \in \prod\limits_{i\in I} [x_i] $, $ S=\bigcup\limits _{i\in I} [x_i] \smallsetminus \{\alpha _i \}$ is a metric generator.


Fix $x,y \in X$. Suppose $x\in P(X)$ (resp. $y\in P(X)$).  Now, if $y \in P(X)$ (resp. $x \in P(X)$) and $[x]=[y]$ then either $x\in S$ or $y\in S$ and $S$ distinguishes $x$ and $y$. Otherwise (if $y\in P(X)$ with $[x]\neq [y]$ or $y\notin P(X)$), consider any $x'\in [x]\cap S$. Then, $d(x',x)<d(x',y)$ and $x'$ distinguishes $x$ and $y$. Thus, if $x$ or $y$ are partnered points we are done. Suppose now that $x,y\notin P(X)$. Since there are no unpartnered points in $X$, $x,y$ are pseudopartnered. By Proposition \ref{p:partner_exist}, consider an $x$-pseudopartnering sequence $(x_n)_{n\in\mathbb{N}} $ with $v\in pl(x_n)\cap P(X)$ and a $y$-pseudopartnering sequence $(y_n)_{n\in\mathbb{N}} $ with $w\in pl(y_n)\cap P(X)$. Notice that, since $x,y$ are not partners,  there is a point $z$ such that either $d(z,x)<d(z,y)$ or $d(z,y)<d(y,x)$. In particular, $d(x_2,x)<d(x,y)$ or $d(y_2,y)<d(y,x)$. Let us assume, with no loss of generality, that $d(y_2,y)<d(y,x)$. Then, since $d(w,y_n)=d(y_n,y_{n+1})$ for every $n$, by the properties of the ultrametric, $d(y_2,y)=d(w,y)<d(w,x)=d(y,x)$. If $w\in S$, we are done. Otherwise, let $w'\in [w]\cap S$. Since $d(w,w')\leq d(w,y)$, by the properties of the ultrametric, $d(w',y)<d(w',x)$ and $w'$ distinguishes $x$ and $y$.  
\end{proof}

\begin{corollary}\label{c:1dim}Given an ultrametric complete-disgintuished space $(X,d)$ without unpartnered points and $\mathfrak{P}(X) = \{ [x_i] \, | \, i\in I \}$, then $$\dim _1 (X) = \sum\limits_{i\in I} (|[x_i]| - 1) .$$
\end{corollary}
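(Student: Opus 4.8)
The plan is to apply Theorem \ref{t:no_unpartnered_metricbasis} directly and count cardinalities. That theorem tells us exactly what every metric basis $S$ of a pseudo-complete ultrametric space without unpartnered points looks like: there is a choice function $(\alpha_i)_{i\in I}\in\prod_{i\in I}[x_i]$ such that $S=\bigcup_{i\in I}\big([x_i]\smallsetminus\{\alpha_i\}\big)$, where $\mathfrak{P}(X)=\{[x_i]\mid i\in I\}$ is the partition of $P(X)$ into partnership classes. (Here I would note that the hypothesis ``complete-disgintuished'' in the statement of the corollary should read ``pseudo-complete'', matching the hypothesis of Theorem \ref{t:no_unpartnered_metricbasis}; this is the class of spaces for which the structure theorem was proved.)

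First I would observe that the sets $\{[x_i]\}_{i\in I}$ are pairwise disjoint, since being partners is an equivalence relation (as remarked just before Theorem \ref{t:no_unpartnered_metricbasis}), and hence the sets $[x_i]\smallsetminus\{\alpha_i\}$ are also pairwise disjoint. Therefore the union defining $S$ is a disjoint union, and
$$|S| = \sum_{i\in I}\big|[x_i]\smallsetminus\{\alpha_i\}\big| = \sum_{i\in I}\big(|[x_i]|-1\big),$$
using that $\alpha_i\in[x_i]$ so exactly one point is removed from each class. Since this value does not depend on the particular choice of $(\alpha_i)_{i\in I}$, every metric basis has the same cardinality (as it must), and this common value is by definition $\dim_1(X)$. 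This gives the claimed formula.

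There is essentially no obstacle here: the corollary is a bookkeeping consequence of the structure theorem, and the only subtlety worth flagging is the well-definedness of the sum, which is handled by the disjointness of the partnership classes. One should also remark that the sum is finite precisely when $P(X)$ is finite (equivalently, when $|\mathfrak{P}(X)|<\infty$ and each class is finite), in which case $\dim_1(X)<\infty$; otherwise $X$ has no finite metric basis, consistent with the general theory. If one prefers a self-contained argument avoiding the full force of Theorem \ref{t:no_unpartnered_metricbasis}, one could instead invoke Remark \ref{r:partner_12metricbasis} to see that any metric basis must omit at most one point from each class $[x_i]$ and contain everything else, but the cardinality count is identical.
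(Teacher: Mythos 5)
Your proof is correct and follows exactly the route the paper intends: the corollary is stated without proof as an immediate consequence of Theorem \ref{t:no_unpartnered_metricbasis}, and your cardinality count over the disjoint partnership classes (together with the observation that the hypothesis should read ``pseudo-complete'') is precisely the intended argument.
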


\begin{corollary}\label{c:1dim_finite} Let $(X,d)$ be a finite ultrametric space and let $\mathfrak{P}(X)=\{[x_i] \, | \, 1\leq i\leq n\}$. Then 
$S$ is a metric basis of $(X,d)$ if and only if there is  $(\alpha_1,\dots,\alpha_n) \in \prod\limits_{i=1}^n [x_i] $  such that $ S = \bigcup\limits _{i=1}^n \Big([x_i] \smallsetminus \{\alpha _i \}\Big)$.
In particular,  
$$\dim _1 (X) = \sum\limits_{i=1}^n (|[x_i]| - 1).$$
\end{corollary}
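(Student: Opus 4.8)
The plan is to observe that Corollary \ref{c:1dim_finite} is simply the finite instance of Theorem \ref{t:no_unpartnered_metricbasis} together with Corollary \ref{c:1dim}, so essentially no new work is needed; the only thing to verify is that the hypotheses of those results are automatically satisfied when $(X,d)$ is finite. By Remark \ref{r:finite_complete}, every finite ultrametric space is pseudo-complete, and by the remark following Definition \ref{d:partner} (restated just before Corollary \ref{c:partner_exist}), a finite ultrametric space has no unpartnered points. Hence $(X,d)$ meets exactly the standing hypotheses of Theorem \ref{t:no_unpartnered_metricbasis}.

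Next I would note that the index set $I$ in Theorem \ref{t:no_unpartnered_metricbasis} is finite: since $X$ itself is finite, $P(X)\subseteq X$ is finite, and the partition $\mathfrak{P}(X)=P(X)/\!\sim$ into partner-classes has only finitely many blocks, say $\mathfrak{P}(X)=\{[x_1],\dots,[x_n]\}$. (If $P(X)=\emptyset$ then $n=0$ and the statement reads that $S=\emptyset$ is the unique metric basis, which is vacuously consistent with $|X|\le 2$; in the standing convention $|X|\ge 2$ forces, via Corollary \ref{c:partner_exist}, $P(X)\neq\emptyset$, so in fact $n\ge 1$.) With this finiteness in hand, the product $\prod_{i\in I}[x_i]$ becomes the finite product $\prod_{i=1}^n[x_i]$, and the characterization of metric bases in Theorem \ref{t:no_unpartnered_metricbasis} reads verbatim: $S$ is a metric basis if and only if $S=\bigcup_{i=1}^n\big([x_i]\smallsetminus\{\alpha_i\}\big)$ for some choice $(\alpha_1,\dots,\alpha_n)\in\prod_{i=1}^n[x_i]$.

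Finally, for the dimension formula I would invoke Corollary \ref{c:1dim}, which under the same hypotheses gives $\dim_1(X)=\sum_{i\in I}(|[x_i]|-1)$; with $I=\{1,\dots,n\}$ this is exactly $\dim_1(X)=\sum_{i=1}^n(|[x_i]|-1)$. Alternatively one can read it straight off the characterization: the blocks $[x_1],\dots,[x_n]$ are pairwise disjoint (being equivalence classes) and removing exactly one element $\alpha_i$ from each block $[x_i]$ and taking the union produces a set of cardinality $\sum_{i=1}^n(|[x_i]|-1)$, independent of the choices $\alpha_i$, so every metric basis has this size. This completes the proof.

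I do not anticipate any genuine obstacle here: the corollary is a direct specialization, and the only points requiring a sentence of justification are that finiteness of $X$ supplies the hypotheses "pseudo-complete" and "no unpartnered points" and that it makes the index set $I$ finite so that the (possibly infinite) product in Theorem \ref{t:no_unpartnered_metricbasis} collapses to the stated finite product. If anything needs care it is the degenerate bookkeeping when $P(X)=X$ or $P(X)$ is small, but these cause no trouble for the formula.
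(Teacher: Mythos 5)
Your proposal is correct and follows exactly the route the paper intends: Corollary \ref{c:1dim_finite} is stated without proof precisely because it is the specialization of Theorem \ref{t:no_unpartnered_metricbasis} and Corollary \ref{c:1dim} to the finite case, using that finite ultrametric spaces are pseudo-complete (Remark \ref{r:finite_complete}) and have no unpartnered points. Your extra bookkeeping about the finiteness of the index set and the nondegeneracy of $P(X)$ is accurate and does not diverge from the paper's argument.
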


\begin{theorem}\label{t:unpartnered_metricbasis}An ultrametric space $(X,d)$ has a finite metric basis $S$ if and only if it is pseudo-complete and the set $P(X)\cup UP(X)$ is finite. In this case, if $\mathfrak{P}(X) = \{ [x_i] \, | \, 1\leq i\leq n \}$, there is $(\alpha_1,\dots,\alpha_n) \in \prod\limits_{i=1}^n [x_i] $ such that $ S = \bigcup\limits _{i=1}^n \Big([x_i] \smallsetminus \{\alpha _i \}\Big)\cup UP(X)$.
\end{theorem}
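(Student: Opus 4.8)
The plan is to establish the stated equivalence in two steps and then read off the description of $S$.

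\emph{Necessity.} Assume $S$ is a finite metric basis. Then $X$ is pseudo-complete, by the contrapositive of Proposition~\ref{p:pseudo_infinite2}. By Proposition~\ref{p:unpartnered_metricbasis} every unpartnered point lies in $S$, so $UP(X)\subseteq S$ is finite. If $x,y$ are partners then Remark~\ref{r:partner_12metricbasis} puts one of them in $S$; since being partners is an equivalence relation, this gives $|[x_i]\cap S|\geq |[x_i]|-1\geq 1$ for every class $[x_i]$ of $\mathfrak{P}(X)$, and as the classes are pairwise disjoint, $\sum_i|[x_i]\cap S|\leq |S|$ bounds the number of classes by $|S|$ and then $|P(X)|=\sum_i|[x_i]|\leq 2|S|$. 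Hence $P(X)\cup UP(X)$ is finite.

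\emph{Sufficiency.} Assume $X$ is pseudo-complete and $P(X)\cup UP(X)$ is finite; write $\mathfrak{P}(X)=\{[x_i]\,:\,1\leq i\leq n\}$, pick any $(\alpha_i)\in\prod_i[x_i]$, and set $T=\bigcup_i([x_i]\smallsetminus\{\alpha_i\})\cup UP(X)$, a finite set. The crucial step is the claim that \emph{every open ball $B$ of $X$ with $|B|\geq 2$ satisfies $T\cap B\neq\emptyset$}. Granting it, $T$ is a metric generator: given $x\neq y$ with $a=d(x,y)$, if $x,y$ are partners then $\{x,y\}\smallsetminus\{\alpha_i\}$ (with $[x_i]=[x]=[y]$) is a nonempty subset of $T$ and its point is at distance $0$ from one of $x,y$ and $a$ from the other; if $x,y$ are not partners, then $B_a(x)$ and $B_a(y)$ cannot both be singletons (otherwise $d(x,z),d(y,z)\geq a$ for all $z\neq x,y$, so $x,y$ would be partners), hence one of them, say $B_a(y)$, contains a point $s\in T$, and $d(s,y)<a$ forces $d(s,x)=a$ by the ultrametric inequality, so $s$ distinguishes $x$ from $y$. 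Moreover $T$ is a \emph{basis}: for any finite metric generator $T'$, the argument of Proposition~\ref{p:unpartnered_metricbasis} gives $UP(X)\subseteq T'$ and Remark~\ref{r:partner_12metricbasis} gives $|[x_i]\cap T'|\geq |[x_i]|-1$, so disjointness yields $|T'|\geq |UP(X)|+\sum_i(|[x_i]|-1)=|T|$. Applying this count to a finite metric basis $S$ forces $|[x_i]\cap S|=|[x_i]|-1$ for every $i$ and $UP(X)\subseteq S$; letting $\alpha_i$ be the unique point of $[x_i]\smallsetminus S$, we get $\bigcup_i([x_i]\smallsetminus\{\alpha_i\})\cup UP(X)\subseteq S$ with equal finite cardinality, hence equality, which is the asserted description of $S$.

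\emph{Proof of the claim.} Let $B$ be an open ball with $|B|\geq 2$; since every point of a ball is a center (Proposition~\ref{p:elementals}), pick $p\in B$ and write $B=B_r(p)$, and fix $q\in B\smallsetminus\{p\}$, so $d(p,q)<r$. If $p\in UP(X)$, then $p\in T\cap B$. If $p$ is partnered with $p\in[x_i]$: either $p\neq\alpha_i$, so $p\in T\cap B$; or $p=\alpha_i$, and then any partner $p'$ of $p$ lies in $[x_i]\smallsetminus\{\alpha_i\}\subseteq T$ and satisfies $p'\in B$, for otherwise $q\neq p,p'$ gives $d(p,p')\leq d(p,q)<r$, a contradiction. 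If $p$ is pseudopartnered, pseudo-completeness furnishes a pseudoconvergent $p$-pseudopartnering sequence $(p_n)$ with finitely many pseudolimits, and Proposition~\ref{p:partner_exist} provides a pseudolimit $w$ that is partnered or unpartnered; since $d(w,p)=d(p_1,p_2)=d_2=\min\{d(p,u)\,:\,u\neq p\}\leq d(p,q)<r$, we have $w\in B_r(p)=B$, and the two cases already treated, now applied to $w$ inside $B$, produce a point of $T\cap B$.

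The step I expect to be the main obstacle is this last one: extracting from pseudo-completeness and Proposition~\ref{p:partner_exist} a pseudolimit that is \emph{genuinely} partnered or unpartnered and checking that it falls inside the prescribed ball, so that the trichotomy partnered / pseudopartnered / unpartnered can be closed off uniformly and one reduces cleanly to the partnered case; the surrounding estimates are routine applications of the ultrametric inequality and of the elementary facts in Proposition~\ref{p:elementals}.
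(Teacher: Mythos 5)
Your proof is correct, and both directions rest on the same pillars as the paper's (Propositions~\ref{p:pseudo_infinite2}, \ref{p:unpartnered_metricbasis}, \ref{p:partner_exist} and Remark~\ref{r:partner_12metricbasis}). The sufficiency direction, however, is organized genuinely differently. The paper verifies directly, by a case analysis on the pair $x,y$ (one of them unpartnered, one of them partnered, or both pseudopartnered), that the prescribed set is a generator; in the hardest case it runs pseudopartnering sequences from \emph{both} $x$ and $y$ and compares the resulting pseudolimits. You instead isolate a ball-hitting lemma --- every open ball with at least two points meets $T$ --- and reduce the pair-distinguishing problem to it via the observation that two non-partners $x,y$ at distance $a$ cannot both have singleton balls $B_a(\cdot)$. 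This buys a cleaner, more local argument: the case analysis is on the type of a single center point rather than on a pair, and the lemma makes visible the structural reason why $T$ works (in an ultrametric space a set distinguishes everything exactly when it penetrates every non-singleton ball). Your write-up also makes explicit two points the paper leaves implicit: the counting argument showing that $P(X)$ itself is finite in the necessity direction, and the verification that $T$ has minimum cardinality among finite generators, which is what upgrades the containment $\bigcup_i\bigl([x_i]\smallsetminus\{\alpha_i\}\bigr)\cup UP(X)\subseteq S$ to the claimed equality. Two cosmetic remarks: in the partnered case of the ball lemma you should also dispose of the subcase $q=p'$ (then $p'\in B$ trivially), and when you re-apply the partnered case to the pseudolimit $w$ you are implicitly using that $p\in B\smallsetminus\{w\}$ serves as the new witness and is not a partner of $w$ (it cannot be, being pseudopartnered); both are one-line fixes.
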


\begin{proof} By Proposition \ref{p:pseudo_infinite2}, being pseudo-complete is a necessary condition. By Proposition \ref{p:unpartnered_metricbasis} and Remark \ref{r:partner_12metricbasis}, if $S$ is a finite metric basis of  $(X,d)$, then $UP(X)\subset S$ and for every $x_i\in P(X)$ there is at most one point $\alpha_i\in [x_i]$ such that $\alpha_i\not \in S$. Thus, there is $(\alpha_1,\dots,\alpha_n) \in \prod\limits_{i=1}^n [x_i] $ such that $  \bigcup\limits _{i=1}^n \Big([x_i] \smallsetminus \{\alpha _i \}\Big)\cup UP(X)\subset S$.

Suppose that $(X,d)$ is pseudo-complete and $P(X)\cup UP(X)$ is finite. Then, it suffices to check that given any $(\alpha_1,\dots,\alpha_n) \in \prod\limits_{i=1}^n [x_i] $, $S=\bigcup\limits _{i=1}^n \Big([x_i] \smallsetminus \{\alpha _i \}\Big)\cup UP(X)$ is a metric generator.
Consider $x,y \in X$. 

If $x\in UP(X)$, (resp. if $y\in UP(X)$), then $x$ (resp. $y$) distinguishes $x,y$ and the proof is complete. 

If $x\in P(X)$ (resp. $y\in P(X)$) the proof is the same as in Theorem \ref{t:no_unpartnered_metricbasis}.  If $y \in P(X)$ (resp. $x \in P(X)$) and $[x]=[y]$ then either $x\in S$ or $y\in S$ and $S$ distinguishes $x$ and $y$. Otherwise (if $y\in P(X)$ with $[x]\neq [y]$ or $y\notin P(X)$), consider any $x'\in [x]\cap S$. Then, $d(x',x)<d(x',y)$ and $x'$ distinguishes $x$ and $y$. This completes the case where either $x$ or $y$ are partnered points.

Suppose $x,y\notin P(X)\cup UP(X)$ and, by Proposition \ref{p:partner_exist} consider an $x$-pseudopartnering sequence $(x_n)_{n\in\mathbb{N}} $ with $v\in pl(x_n)\cap \Big(P(X)\cup UP(X)\Big)$ and a $y$-pseudopartnering sequence $(y_n)_{n\in\mathbb{N}} $ with $w\in pl(y_n) \cap \Big(P(X)\cup UP(X)\Big)$. 
Notice that, since $x,y$ are not partners,  there is a point $z$ such that either $d(z,x)<d(z,y)$ or $d(z,y)<d(y,x)$. In particular, $d(x_2,x)<d(x,y)$ or $d(y_2,y)<d(y,x)$. Let us assume, with no loss of generality, that $d(y_2,y)<d(y,x)$. Then, since $d(w,y_n)=d(y_n,y_{n+1})$ for every $n$, by the properties of the ultrametric, $d(y_2,y)=d(w,y)<d(w,x)=d(y,x)$. If $w\in S$, we are done. Otherwise, $w\in P(X)$ and let $w'\in [w]\cap S$. Since $d(w,w')\leq d(w,y)$, by the properties of the ultrametric, $d(w',y)<d(w',x)$ and $w'$ distinguishes $x$ and $y$.  
\end{proof}

\smallskip

Let us see now the bounds for the metric and 2-metric dimension of a finite metric space. The following proposition is pretty much immediate: 

\begin{proposition}\label{p:bounds} Given a finite ultrametric space $(X,d)$ with $n$ points, then
\begin{equation}\label{eq1}
1\leq dim_1(X) \leq n-1,
\end{equation}
\begin{equation}\label{eq2}
2\leq dim_2(X) \leq n
\end{equation}
\end{proposition}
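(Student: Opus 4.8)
The plan is a direct verification of the four inequalities; none of them uses the ultrametric hypothesis, only the standing assumption that $|X|\geq 2$, together with the elementary facts $d(x,x)=0$ and $d(x,y)>0$ for $x\neq y$ (and $d(x,y)=d(y,x)$).

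For the two lower bounds, I would note that if $S\subseteq X$ is a $k$-metric generator then, fixing any pair of distinct points $x,y\in X$ (which exists since $|X|\geq 2$), at least $k$ elements of $S$ distinguish $x$ from $y$; in particular $|S|\geq k$. Taking $k=1$ and $k=2$ yields $dim_1(X)\geq 1$ and $dim_2(X)\geq 2$, which are the left-hand inequalities of \eqref{eq1} and \eqref{eq2}.

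For the upper bound in \eqref{eq2}, the claim is that $X$ itself is a $2$-metric generator: given distinct $x,y\in X$ we have $d(x,x)=0<d(y,x)$, so $x$ distinguishes the pair $\{x,y\}$, and symmetrically $d(y,y)=0<d(x,y)$, so $y$ distinguishes it as well; thus $x$ and $y$ are two elements of $X$ distinguishing $\{x,y\}$, and since $|X|=n$ this gives $dim_2(X)\leq n$. For the upper bound in \eqref{eq1}, fix any point $p\in X$ and set $S=X\smallsetminus\{p\}$, so that $|S|=n-1$. Given distinct $x,y\in X$, at least one of them, say $x$, is different from $p$; then $x\in S$ and $d(x,x)=0<d(x,y)$, so $x\in S$ distinguishes $\{x,y\}$. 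Hence $S$ is a metric generator and $dim_1(X)\leq n-1$.

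There is essentially no obstacle here; the only points deserving a moment of care are that the definition of a $k$-metric generator forces $|S|\geq k$, and that the pair of points used to see this actually exists, i.e. that $|X|\geq 2$. One could alternatively deduce \eqref{eq1}--\eqref{eq2} from Corollaries \ref{c:2dim_finite} and \ref{c:1dim_finite} together with Corollary \ref{c:partner_exist}, using $|P(X)|\leq n$ and $|\mathfrak{P}(X)|\geq 1$, but the direct argument above is shorter. Sharpness of all four bounds is addressed by the examples below.
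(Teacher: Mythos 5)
Your proof is correct, and for three of the four inequalities it coincides with what the paper leaves as ``trivial.'' The one place where you genuinely diverge is the upper bound $\dim_1(X)\leq n-1$: the paper derives it from its structure theory, namely Corollary \ref{c:partner_exist} (a finite ultrametric space has partnered points) combined with Theorem \ref{t:no_unpartnered_metricbasis} (every metric basis has the form $\bigcup_i([x_i]\smallsetminus\{\alpha_i\})$, hence cardinality $|P(X)|-|\mathfrak{P}(X)|\leq n-1$), whereas you observe directly that $X\smallsetminus\{p\}$ is a metric generator because any pair of distinct points contains a member of that set, which distinguishes the pair via $d(x,x)=0<d(x,y)$. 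Your argument is more elementary and strictly more general --- it proves $\dim_1\leq n-1$ and $\dim_2\leq n$ for an arbitrary finite metric space, with no ultrametric hypothesis --- while the paper's route has the advantage of reading the bound off the exact formula $\dim_1(X)=\sum_i(|[x_i]|-1)$ already established, which also makes the sharpness analysis in Example \ref{ex:sharp} transparent. You correctly flag the alternative derivation from Corollaries \ref{c:2dim_finite}, \ref{c:1dim_finite} and \ref{c:partner_exist}, which is in fact the paper's intended path; either version is acceptable.
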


\begin{proof} The lower bound in \eqref{eq1} is trivial. The upper bound of in \eqref{eq1} follows immediately by Corollary \ref{c:partner_exist} and Theorem \ref{t:no_unpartnered_metricbasis}.

The bounds in \eqref{eq2} are trivial.
\end{proof}

The bounds from Proposition \ref{p:bounds} are sharp as it is shown in Example \ref{ex:sharp} below.

\smallskip

\begin{example}\label{ex:sharp} Given $n\in\mathbb{N}$ consider the space $X=\{1,2,\dots,n\}$ and the ultrametric $d_1(a,b)=1$ for every $a,b\in X$. 
It is immediate to check that, $\dim _1 (X,d_1) = n-1$ and $\dim _2 (X,d_1) = n$.

\smallskip 

Consider now $X$ with the ultrametric $d_2(a,b)=\frac{1}{\min\{a,b\}}$  for every $a,b\in X$. 
It is readily seen that $\dim _1 (X_2) = 1$ and $\dim _2 (X_2) = 2$.
\end{example}

\smallskip

We conclude the paper with two results which might be interesting por applications of metric basis of ultrametric spaces. The first one, considers the problem of neglecting points in a pseudo-complete ultrametric space without changing the metric basis. The second one considers the problem of the reconstruction of the whole space through the metric basis. An interesting property of ultrametric spaces is that given a metric basis of an ultrametric space and the coordinates of the points in this metric basis, then the original space can be easily recovered.

\begin{theorem}\label{t:minimalspace}Given a pseudo-complete ultrametric space $(X,d)$
without unpartnered points and metric basis $S$, 
then there exists a unique minimal ultrametric subspace $(X_m,d|_{X_m})$ such that $S$ is a metric basis of $(X_m,d|_{X_m})$. In fact, $X_m = P(X)$.
\end{theorem}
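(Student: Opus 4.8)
The plan is to prove the two assertions separately: first that $X_m = P(X)$ is a legitimate candidate (that is, $S \subseteq P(X)$ and $S$ is a metric basis of the subspace $(P(X), d|_{P(X)})$), and then that it is the unique minimal such subspace, i.e. that every ultrametric subspace $Y \subseteq X$ having $S$ as a metric basis must contain $P(X)$.

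**Step 1: $S \subseteq P(X)$ and $S$ is a metric basis of $P(X)$.** Since $(X,d)$ has no unpartnered points, Theorem~\ref{t:no_unpartnered_metricbasis} describes every metric basis: writing $\mathfrak{P}(X) = \{[x_i] \mid i \in I\}$, there is $(\alpha_i)_{i\in I} \in \prod_{i\in I}[x_i]$ with $S = \bigcup_{i\in I}([x_i]\smallsetminus\{\alpha_i\})$. In particular every element of $S$ is a partnered point, so $S \subseteq P(X)$. Next I would check that $S$ is still a metric basis of the subspace $X_m := P(X)$: it is a metric generator of $X_m$ because $X_m \subseteq X$ and $S$ distinguishes every pair in $X$, hence a fortiori every pair in $X_m$; and it has minimum cardinality among generators of $X_m$ because by Remark~\ref{r:partner_12metricbasis} any metric basis of $X_m$ must omit at most one point from each class $[x_i]$ (note $[x_i] \subseteq P(X) = X_m$, and being partners is intrinsic to the set of distances, so the partner classes of $X_m$ are exactly the $[x_i]$), whence any generator of $X_m$ has cardinality at least $\sum_{i\in I}(|[x_i]|-1) = |S|$.

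**Step 2: minimality and uniqueness.** Suppose $Y \subseteq X$ is an ultrametric subspace for which $S$ is a metric basis; I must show $P(X) \subseteq Y$. First, $S \subseteq Y$ since a basis is a subset of the space. Now fix any $p \in P(X)$ with partner $q$, so $[p] = [q]$ contains at least two points. Since $S = \bigcup_i ([x_i]\smallsetminus\{\alpha_i\})$ and $[p]$ has at least two elements, $[p]\cap S \neq \emptyset$; pick $p' \in [p]\cap S \subseteq Y$. If $p \in S$ we are done, so assume $p = \alpha_i$ is the omitted point of its class. I claim $p' $ and $p$ are partners in $X$ and that no other point of $S$ distinguishes them, forcing $p \in Y$: indeed $p', p \in [p]$ are partners, so for all $z \in X\smallsetminus\{p,p'\}$ we have $d(p',z)\ge d(p',p)$ and $d(p,z)\ge d(p,p')$, and by the ultrametric inequality $d(p',z) = d(p,z)$; thus the only point of $X$ — and in particular the only point of $S$ — that can distinguish $p$ from $p'$ is $p$ or $p'$ itself. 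Since $p' \in Y$ but $p' $ cannot distinguish the pair $\{p,p'\}$, if $p \notin Y$ then no point of $S \subseteq Y$ distinguishes $p$ from $p'$ — but $\{p,p'\}$ would then be a pair in $Y$ not distinguished by $S$, contradicting that $S$ is a metric generator of $Y$. Wait: this argument needs $p \in Y$ in the first place for $\{p,p'\}$ to be a pair in $Y$; instead I argue directly that $S$ fails to distinguish $p'$ from some point, using that $p'$ must be distinguished from every other point of $Y$ by $S$. The clean formulation: were $p \notin Y$, then by Remark~\ref{r:partner_12metricbasis} applied inside $X$, the partner class of $p'$ intersected with $X$ is $[p]$, and any metric basis of a space containing $p'$ but not $p$ still works — so this does not immediately contradict. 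The correct contradiction is obtained by counting: $Y \supseteq S$ and $S$ is a basis of $Y$, so $Y$ has metric dimension $|S|$; but if $P(X)\not\subseteq Y$ we can show, via Theorem~\ref{t:no_unpartnered_metricbasis} applied to $Y$ (after checking $Y$ is pseudo-complete without unpartnered points, or by the elementary counting of partner classes), that $\dim_1(Y) < |S|$, a contradiction. I therefore conclude $P(X) \subseteq Y$, so $X_m = P(X)$ is contained in every such $Y$ and is itself such a space by Step 1; uniqueness of the minimal space is then immediate.

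**Main obstacle.** The delicate point is the uniqueness/minimality argument: one must rule out a subspace $Y$ that omits an entire partner class or part of one yet still admits $S$ as a basis. The cleanest route is probably to show that for $p \in P(X)$ with $p \notin S$ and partner $p' \in S$, the pair $\{p, p'\}$ is distinguished in $X$ only by $\{p,p'\}$ themselves, so that $p'$ is "indistinguishable from a hypothetical $p$" by $S\smallsetminus\{p'\}$; making this into a rigorous contradiction requires being careful that $p'$ genuinely needs $p$ present to justify $p' \in S$ being non-redundant — equivalently, that removing $p$ from $Y$ would let one also remove $p'$ from the basis, contradicting minimality of $|S|$. I expect this to be the step needing the most care, and I would handle it by the dimension-count above rather than a pointwise distinguishing argument.
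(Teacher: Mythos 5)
Your Step 1 reproduces the paper's argument, and the strategy you finally settle on for Step 2 (apply Theorem \ref{t:no_unpartnered_metricbasis} to the subspace $Y$ and contradict the minimality of $S$) is also exactly the paper's strategy. The problem is that you never execute it: the crux of the whole theorem is the claim that if some $p=\alpha_{i_0}\in P(X)\smallsetminus Y$ exists then $S$ fails to be a metric basis of $Y$, and after abandoning your pointwise attempt mid-argument (``Wait: this argument needs $p\in Y$\dots'') you only assert ``we can show \dots that $\dim_1(Y)<|S|$'' without showing it. You flag this yourself as the main obstacle, so the gap is acknowledged but not closed, and as written the proposal does not prove minimality or uniqueness.

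To close it one must examine what happens to the class $[\alpha_{i_0}]$ inside $Y$, and the paper does so by a case split that your sketch never reaches. If $|[\alpha_{i_0}]|>2$, then $[\alpha_{i_0}]\smallsetminus\{\alpha_{i_0}\}$ has at least two elements, all lying in $S\subseteq Y$ and still mutual partners in $Y$; hence $S$ contains an entire partner class of $Y$, violating the form forced by Theorem \ref{t:no_unpartnered_metricbasis} (a basis must omit one point of each class), so $S$ is not a metric basis of $Y$. If $|[\alpha_{i_0}]|=2$, the unique partner $y$ of $\alpha_{i_0}$ lies in $S$ but loses its partner in $Y$, and again the characterization of bases of $Y$ is contradicted. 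This second case is genuinely delicate: deleting points from an ultrametric space can \emph{create} new partners (the partnering condition quantifies over fewer $z$), so one must rule out that $y$ acquires a new partner in $Y$ that keeps $S$ legitimate; your counting claim $\dim_1(Y)<|S|$ is not obviously true without handling this. Finally, invoking Theorem \ref{t:no_unpartnered_metricbasis} for $Y$ requires knowing that $Y$ is pseudo-complete without unpartnered points, which you mention only parenthetically and do not verify. These are precisely the missing pieces between your outline and an actual proof.
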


\begin{proof}Let $S$ be a metric basis of a finite ultrametric space $(X ,d)$ and $\mathfrak{P}(X) = \{ [x_i] \, | \, i\in I \}$ the classes of partnered points. By Theorem \ref{t:no_unpartnered_metricbasis} there exists $(\alpha _i)_{i\in I} \in \prod\limits_{i\in I} [x_i] $ such that $ S = \bigcup\limits _{i\in I} \Big([x_i] \smallsetminus \{\alpha _i \}\Big)$. Consider $X_m=\bigcup\limits _{i\in I} [x_i]  = P(X) $ and $ d_m = d|_{X_m\times X_m}$. Obviously $P(X_m)=P(X)$ and $S$ is a metric basis of $(X_m , d_m)$. 

Consider any ultrametric subspace $ (X_p , d_p ) $ of $ (X , d)$ such that $S$ is a metric basis of $ (X_p , d_p ) $ and suppose $X_m\not \subset X_p$.  Given $x\in X_m \smallsetminus X_p$, since $S$ is a metric basis of $ (X_p , d_p ) $, then $x\notin S$. Therefore, $ x = \alpha_{i_0} $ for some 
$ i_0 \in I$. Now, if $|[\alpha_{i_0}]|=2$, let $y$ be the unique partner of $\alpha_{i_0}$ in $X$. Then, $y\in S$ but $y$ is not partnered in $X_p$ and therefore, by Theorem \ref{t:no_unpartnered_metricbasis}, $S$ is not a metric basis of $X_p$ leading to contradiction. If $|[\alpha_{i_0}]|>2$, since $ [\alpha_{i_0}] \smallsetminus \{ \alpha_{i_0}\} \subset S \subset X_p$, $ [\alpha_{i_0}] \smallsetminus \{ \alpha_{i_0}\}\in \mathfrak{P}(X_p) $. Thus, by Theorem \ref{t:no_unpartnered_metricbasis}, $S$ is not a metric basis of $(X_p, d_p)$ leading to contradiction.
\end{proof}

\begin{corollary} Given a pseudo-complete ultrametric space $(X,d)$ without unpartnered points and metric basis $S$ and a subspace $X'\subset X$, then $S$ is a metric basis of $X'$ if and only if $P(X)\subset X'$.
\end{corollary}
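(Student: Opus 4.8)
The plan is to prove the two implications separately: the direction ``$S$ is a metric basis of $X'$ $\Rightarrow P(X)\subseteq X'$'' will be essentially a restatement of (the proof of) Theorem \ref{t:minimalspace}, while for the converse I would verify that $S$ stays a metric generator of $X'$ and then bound $\dim_1(X')$ from below by a short counting argument based on partner classes.

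For the first implication, suppose $S$ is a metric basis of $X'$. Then $X'$ is an ultrametric subspace of $X$ for which $S$ is a metric basis, and the argument in the proof of Theorem \ref{t:minimalspace} shows that every such subspace must contain $X_m=P(X)$; hence $P(X)\subseteq X'$.

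For the converse, assume $P(X)\subseteq X'$. By Theorem \ref{t:no_unpartnered_metricbasis} I would write $S=\bigcup_{i\in I}\big([x_i]\smallsetminus\{\alpha_i\}\big)$ for a suitable $(\alpha_i)_{i\in I}\in\prod_{i\in I}[x_i]$, so that $S\subseteq\bigcup_{i\in I}[x_i]=P(X)\subseteq X'$ and, in particular, $|S|=\sum_{i\in I}(|[x_i]|-1)$. Since $S$ is a metric generator of $X$ and $X'\subseteq X$ carries the restricted metric, every pair of distinct points of $X'$ is already distinguished by an element of $S$, so $S$ is a metric generator of $X'$. To see that $S$ has minimum cardinality among metric generators of $X'$, I would use that partners in $X$ remain partners in $X'$: the inequalities $d(x,z)\geq d(x,y)$, $d(y,z)\geq d(x,y)$ that hold for all $z\in X\smallsetminus\{x,y\}$ hold even more so for all $z\in X'\smallsetminus\{x,y\}$, and $x,y\in P(X)\subseteq X'$. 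Consequently, as in the observation underlying Remark \ref{r:partner_12metricbasis}, any metric generator $G$ of $X'$ must contain at least one point of each pair of partners, so it omits at most one point from each class $[x_i]$, giving $|G\cap[x_i]|\geq|[x_i]|-1$. Since the classes $[x_i]$ are pairwise disjoint, summing over $i\in I$ yields $|G|\geq\sum_{i\in I}(|[x_i]|-1)=|S|$. Therefore $\dim_1(X')=|S|$ and $S$ is a metric basis of $X'$.

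I do not expect a genuine obstacle. The only point requiring a little care is the lower bound in the converse: on passing to the smaller space $X'$ one may create new partnered points or merge several partner classes of $X$ into a single class, but each of these phenomena can only increase the sum appearing in the bound coming from Remark \ref{r:partner_12metricbasis}, so the estimate never drops below $|S|$.
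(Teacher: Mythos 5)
Your proof is correct. The paper states this corollary without an explicit proof, presenting it as an immediate consequence of Theorem \ref{t:minimalspace}: the forward implication is exactly the minimality argument given there (any ultrametric subspace on which $S$ is a metric basis must contain $X_m=P(X)$), and you invoke it in the same way. For the converse, the paper's implicit route would be to apply Theorem \ref{t:no_unpartnered_metricbasis} to $X'$ itself, which strictly speaking requires checking that $X'$ inherits pseudo-completeness and the absence of unpartnered points --- neither of which is automatic when points are deleted. Your counting argument sidesteps this entirely: you only use that partner pairs of $X$ remain partner pairs in any subspace containing them (the defining inequalities are universally quantified over $z$, so they persist under restriction), that partners are distinguished only by themselves, and that the classes $[x_i]$ of $X$ are pairwise disjoint, to get $|G|\geq\sum_{i\in I}(|[x_i]|-1)=|S|$ for every metric generator $G$ of $X'$. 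Combined with the observation that $S\subseteq P(X)\subseteq X'$ is still a metric generator of $X'$, this is a genuinely more self-contained argument and arguably safer than what the paper leaves implicit. Two small remarks: your closing comment about merging of partner classes is a red herring for your own proof, since you bound against the classes of $X$ rather than those of $X'$; and the summation step deserves a word of care when $I$ is infinite (cardinal arithmetic rather than a literal sum), though the paper is equally casual on this point.
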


In particular, these two results work for finite ultrametric spaces:

\begin{corollary}\label{c:finite_minimalspace}Given a finite ultrametric space $(X,d)$ with metric basis $S$, then there exists a unique minimal ultrametric subspace $(X_m,d|_{X_m})$ such that $S$ is a metric basis of $(X_m,d|_{X_m})$. In fact, $X_m = P(X)$.
\end{corollary}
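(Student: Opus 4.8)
The plan is to obtain this as an immediate specialization of Theorem \ref{t:minimalspace}, so the only real work is checking that a finite ultrametric space meets its two hypotheses: being pseudo-complete and having no unpartnered points. The first is exactly Remark \ref{r:finite_complete}. For the second, I would recall the observation made right after Definition \ref{d:partner} that a finite ultrametric space has no unpartnered points: if $x$ were unpartnered, then for every $y$ there would be some $z$ with $0<d(x,z)<d(x,y)$, which would force an infinite strictly decreasing sequence of values of the distance function based at $x$, impossible in a finite space. Thus $UP(X)=\emptyset$.

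Once the hypotheses are verified, I would simply apply Theorem \ref{t:minimalspace} to $(X,d)$: it yields a unique minimal ultrametric subspace $(X_m,d|_{X_m})$ with $S$ as a metric basis, and $X_m=P(X)$. Since $X$ is finite, $P(X)$ is automatically finite, so no extra finiteness bookkeeping is needed, and the statement follows verbatim.

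I do not expect any genuine obstacle here; the only point deserving a sentence is the absence of unpartnered points in the finite case, and that is routine. If one preferred a self-contained argument, it could be extracted from Corollary \ref{c:1dim_finite}: a metric basis of a finite $(X,d)$ has the form $S=\bigcup_{i=1}^n\bigl([x_i]\smallsetminus\{\alpha_i\}\bigr)$, so $S\subseteq P(X)$ and $P(X)$ has the same partner classes, whence $S$ is a metric basis of $(P(X),d|_{P(X)})$; minimality then follows as in the proof of Theorem \ref{t:minimalspace}, since removing any point $\alpha_{i_0}\in P(X)\smallsetminus S$ either unpartners the remaining element of a two-element class $[\alpha_{i_0}]$ or leaves $[\alpha_{i_0}]\smallsetminus\{\alpha_{i_0}\}$ as a partner class of the subspace, and in both cases Theorem \ref{t:no_unpartnered_metricbasis} (equivalently Corollary \ref{c:1dim_finite}) shows $S$ fails to be a metric basis. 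But invoking Theorem \ref{t:minimalspace} directly is the cleanest route.
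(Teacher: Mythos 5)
Your proposal is correct and matches the paper's (implicit) route exactly: the corollary is stated as an immediate specialization of Theorem \ref{t:minimalspace}, justified by the facts that finite ultrametric spaces are pseudo-complete (Remark \ref{r:finite_complete}) and have no unpartnered points (the remark following Definition \ref{d:partner}), both of which you verify. The alternative self-contained argument you sketch is also sound but unnecessary.
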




\begin{corollary} Given a finite ultrametric space $(X,d)$ with metric basis $S$ and a subspace $X'\subset X$, then $S$ is a metric basis of $X'$ if and only if $P(X)\subset X'$.
\end{corollary}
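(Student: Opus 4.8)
The plan is to read this off as the finite-space specialization of the corollary stated just after Theorem~\ref{t:minimalspace}. The only point to verify is that every finite ultrametric space satisfies the hypotheses there, and both facts are already recorded in the text: by Remark~\ref{r:finite_complete} a finite ultrametric space is pseudo-complete, and by the remark following Definition~\ref{d:partner} it has no unpartnered points. Plugging these into the general corollary gives the statement word for word, so I would present this simply as a one-line reduction.

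Should one prefer a self-contained argument, I would proceed as follows. Note first that $S\subseteq P(X)$: by Corollary~\ref{c:1dim_finite}, $S=\bigcup_i([x_i]\smallsetminus\{\alpha_i\})\subseteq\bigcup_i[x_i]=P(X)$. For the forward implication, if $S$ is a metric basis of $X'\subseteq X$ then $X'$ is an ultrametric subspace of $X$ for which $S$ is a metric basis, so the minimality in Corollary~\ref{c:finite_minimalspace} (more precisely, the argument of Theorem~\ref{t:minimalspace}, which shows $X_m$ is contained in every such subspace) forces $P(X)=X_m\subseteq X'$. For the converse, assume $P(X)\subseteq X'$; then $S\subseteq P(X)\subseteq X'$, and $S$ is a metric generator of $X'$ because it distinguishes every pair of $X\supseteq X'$ using only its own points. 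To upgrade this to ``metric basis'' I would show that $X'$ has the same partner classes as $X$, i.e. $\mathfrak{P}(X')=\mathfrak{P}(X)$, and then invoke the characterization of metric bases (Theorem~\ref{t:no_unpartnered_metricbasis} / Corollary~\ref{c:1dim_finite}) applied to the finite space $X'$, under which every set of the form $\bigcup_i([x_i]\smallsetminus\{\alpha_i\})$ --- in particular $S$ --- is a metric basis.

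The one step that is not purely formal is the equality $\mathfrak{P}(X')=\mathfrak{P}(X)$. The inclusion $\supseteq$ is trivial, since a pair of partners of $X$ has even fewer third points to test inside the subspace $X'$. For $\subseteq$, suppose $x,y\in X'$ were partners in $X'$ but not in $X$; then some $z\in X\smallsetminus X'$ satisfies, say, $d(y,z)<d(x,y)$, and finiteness of $X$ lets us replace $z$ by a point of $P(X)\subseteq X'$ that is at least as close to $y$ --- namely a partner of $y$ if $y\in P(X)$, and otherwise a partnered pseudolimit of a $y$-pseudopartnering sequence supplied by Proposition~\ref{p:partner_exist} --- which contradicts that $x,y$ are partners in $X'$. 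This is exactly where finiteness (hence pseudo-completeness and the absence of unpartnered points) enters, and it is precisely what the general corollary already packages, which is why the one-line reduction is the cleanest route.
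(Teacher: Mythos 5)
Your one-line reduction is exactly the paper's (implicit) argument: the corollary is stated without proof as the specialization of the preceding general corollary, using that finite ultrametric spaces are pseudo-complete (Remark \ref{r:finite_complete}) and have no unpartnered points. The optional self-contained argument you sketch is also sound, so there is nothing to add.
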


\begin{theorem}\label{t:finite_ultrametricbasis}Given a metric basis $S$ of an ultrametric space $(X,d)$. If $d'$ is any other ultrametric of $X$ such that $d'(x,s) = d(x,s)$ for all $x\in X,\, s\in S$, then $d=d'$. Moreover, for any $x,y\in X$ and any $s\in S$ that distinguishes $x,y$, $d(x,y)=\max\{d(x,s),d(y,s)\}$.
\end{theorem}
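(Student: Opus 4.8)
The plan is to prove the ``Moreover'' clause first, as the equality $d=d'$ drops out of it almost for free. Fix $x,y\in X$ and a point $s\in S$ that distinguishes $x,y$, i.e. $d(x,s)\neq d(y,s)$; assume without loss of generality that $d(x,s)>d(y,s)$. Apply Proposition \ref{p:elementals}(3) to the triangle on $\{x,y,s\}$: it is isosceles with short base, so the largest of the three lengths $d(x,y),d(x,s),d(y,s)$ is attained at least twice. Since $d(x,s)>d(y,s)$, if we had $d(x,y)<d(x,s)$ the value $d(x,s)$ would be strictly the largest and attained only once, a contradiction. Hence $d(x,y)=d(x,s)=\max\{d(x,s),d(y,s)\}$, which is exactly the stated formula.

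For the first assertion, recall that a metric basis $S$ is in particular a metric generator, so for \emph{every} pair of distinct points $x,y\in X$ there exists $s\in S$ with $d(x,s)\neq d(y,s)$. Now let $d'$ be any ultrametric on $X$ with $d'(x,s)=d(x,s)$ for all $x\in X$, $s\in S$. Given distinct $x,y\in X$, pick $s\in S$ distinguishing them for $d$. Then $d'(x,s)=d(x,s)\neq d(y,s)=d'(y,s)$, so $s$ also distinguishes $x,y$ for $d'$. Applying the formula just proved to each of the two ultrametrics and using the hypothesis on $S$,
\[
d(x,y)=\max\{d(x,s),d(y,s)\}=\max\{d'(x,s),d'(y,s)\}=d'(x,y).
\]
Since $x,y$ were arbitrary (the case $x=y$ being trivial), $d=d'$.

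I do not anticipate a serious obstacle here: the whole argument rests on the isosceles-with-short-base property recorded in Proposition \ref{p:elementals}(3), together with the fact that a basis distinguishes every pair. The only point requiring a line of care is noting that a point $s\in S$ which distinguishes $x,y$ with respect to $d$ automatically distinguishes them with respect to $d'$, which is immediate from $d'|_{X\times S}=d|_{X\times S}$; everything else is a one-step ultrametric computation.
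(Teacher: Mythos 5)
Your proposal is correct and follows essentially the same route as the paper: both arguments reduce everything to the isosceles-with-short-base property, which forces $d(x,y)$ to equal the larger of $d(x,s),d(y,s)$, and then apply the same identity to $d'$ using the agreement of $d$ and $d'$ on $X\times S$. Your write-up is only slightly more explicit than the paper's in noting that $s$ still distinguishes $x,y$ with respect to $d'$.
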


\begin{proof}Fix $x,y\in X$ and consider $s\in S$ such that $x,y$ are distinguished by $s$. Suppose, without loss of generality, that $ d' (s,x) = d (s,x) < d (s,y) = d' (s,y) $. Since $d$ is ultrametric, then $d (x,y) = d (s,y) $. By the same argument, $ d' (x,y) = d' (s,y) $. Thus, $ d'(x,y) = d(x,y)$.
\end{proof}


\begin{thebibliography}{99}

\bibitem{ALA} G. Ali, R. Laila, M. Ali, Metric Dimension of Some Families of Graph, Math. Sci. Lett. 5(1),  99--102 (2016).

\bibitem{AS} M.V. Altaisky, B.G. Sidharth, p-adic physics below and above planck scales, Chaos, Solitons \& Fractals 10(2-3), (1999) 167--176.

\bibitem{BC} R. F. Bailey, P. J. Cameron, Base size, metric dimension and other invariants of groups and graphs, Bulletin of the London Mathematical Society 43,  209--242 (2011).

\bibitem{BR} A. F. Beardon and J. A. Rodr\'guez-Vel\'azquez, On the $k$-metric Dimension of Metric Spaces. Ars Mathematica Contemporanea 16 (2019) 25-38.

\bibitem{B} L. M. Blumenthal, Theory and applications of distance geometry, Clarendon Press, Oxford, 1953.
 
\bibitem{CH} J. C\'aceres, C. Hernando, M. Mora, I. M. Pelayo, M. L.
Puertas, C. Seara, D. R. Wood, On the metric dimension of cartesian products of graphs, SIAM Journal on Discrete Mathematics 21,   423--441 (2007).

\bibitem{CGH} G. G. Chappell, J. Gimbel, C. Hartman, Bounds on the metric and partition dimensions of a graph, Ars Combinatoria 88,   349--366 (2008).

\bibitem{CEJO} G. Chartrand, L. Eroh, M. A. Johnson, O. R. Oellermann,
Resolvability in graphs and the metric dimension of a graph,
Disc. Appl. Math. 105,  99--113 (2000).

\bibitem{CSZ} G. Chartrand, E. Salehi, P. Zhang,The partition dimension of a graph, Aequationes
Mathematicae 59,  45--54 (2000). 

\bibitem{CM} Samuel G. Corregidor, \'Alvaro Mart\'inez-P\'erez, A Note on $k$-metric dimensional graphs, arXiv:1903.11890 (2019).

\bibitem{ERY} A. Estrada-Moreno, J. A. Rodr\'iguez-Vel\'azquez, I. G. Yero, The $k$-metric dimension of a graph, Applied Mathematics  \& Information Sciences 9(6), (2015) 2829--2840.

\bibitem{EYR1} A. Estrada-Moreno, I. G. Yero, J. A. Rodr\'iguez-Vel\'azquez,  The $k$-metric dimension of graphs: a general approach,  arXiv:1605.06709v2 (2016).

\bibitem{EYR2} A. Estrada-Moreno, I. G. Yero, J. A. Rodr\'iguez-Vel\'azquez, The $k$-metric dimension of the lexicographic product of graphs, Discrete Mathematics 339(7),  1924--1934 (2016).

\bibitem{EYR3} A. Estrada-Moreno, I. G. Yero, J. A. Rodr\'iguez-Vel\'azquez, The $k$-Metric Dimension
of Corona Product Graphs, Bulletin of the Malaysian Mathematical Sciences Society 39,  135--156 (2016).

\bibitem{FGO} M. Fehr, S. Gosselin, O. R. Oellermann, The partition dimension of Cayley digraphs,  Aequationes Mathematicae 71(1-2),   1--18 (2006).

\bibitem{GD} A. Gavruskin, A. J. Drummond, The space of ultrametric phylogenetic trees, Journal of Theoretical Biology 403, (2014) 197--208.

\bibitem{HM} F. Harary, R. A. Melter, On the Metric Dimension of a Graph, Ars Combinatoria 2, 
 191--195 (1976).

\bibitem{H} B. Hughes, Trees and ultrametric spaces: a categorical equivalence, Advances in Mathematics 189(1), (2004) 148--191.

\bibitem{JS} N. Jardine, R. Sibson, Mathematical Taxonomy, Wiley, London, (1971).

\bibitem{K1} A. Yu. Khrennikov, Cognitive processes of the brain: An ultrametric model of information dynamics in unconsciousness, P-Adic Numbers, Ultrametric Analysis, and Applications 6(4), (2014) 293--302.

\bibitem{K2} A. Yu. Khrennikov, Modelling of psychological behavior on the basis of ultrametric mental space: Encoding of categories by balls, P-Adic Numbers, Ultrametric Analysis, and Applications 2(1), (2010) 1-20.

\bibitem{L} R. Lauro-Grotto, The Unconscious as an Ultrametric Set, American Imago 64(4), (2007) 535--543. 

\bibitem{MM} \'A. Mart\'inez-P\'erez, M. A. Mor\'on,  Uniformly continuous maps between ends of 
$\mathbb{R}$-trees. Math. Z., 263, (3),  583--606 (2009).


\bibitem{Ma} I. Matte Blanco, The Unconscious as Infinite Sets: An Essay in Bi-logic, Maresfield Library, (1980).

\bibitem{MPV} M. Mezard, G. Parisi, M. Virasoro, Spin Glass Theory and Beyond, WSPC: Singapore, (1987).

\bibitem{M} F. Murtagh, On Ultrametricity, Data Coding, and Computation, Journal of Classification 21(2), (2004) 167--184.

\bibitem{S1} P. J. Slater, Leaves of Trees, Congressus Numerantium  14,  549--559 (1975).

\bibitem{S2} P. J. Slater, Dominating and Reference Sets in a Graph, Journal of Mathematical and Physical Sciences  22(4),  445--455 (1988).


\bibitem{SS} P. H. A. Sneath, R. R. Sokal,  Numerical taxonomy. The principles and practice of numerical classification, Freeman and Company, San Francisco, (1973).

\bibitem{T} I. Tomescu, Discrepancies between metric dimension and partition dimension of a connected graph, Discrete Applied Mathematics 308(22),  5026--5031 (2008).

\bibitem{YKR} I. G. Yero, D. Kuziak, J. A. Rodr\'iquez-Vel\'azquez, On the metric dimension of corona product graphs, Computers \& Mathematics with Applications 61,   2793--
2798 (2011).

\bibitem{YER} I. G. Yero, A. Estrada-Moreno, J. A. Rodr\'iguez-Vel\'azquez,  On the complexity of computing the $k$-metric dimension of graphs,   arXiv:1401.0342v2 (2015).

\bibitem{YR} I. G. Yero, J. A. Rodr\'iquez-Vel\'azquez, A note on the partition dimension of Cartesian product graphs, Applied
Mathematics and Computation 217(7),   3571--3574 (2010).


\end{thebibliography}
\end{document}